\renewcommand{\thefootnote}{\ensuremath{\fnsymbol{footnote}}}
\theoremstyle{plain}
\newtheorem{theorem}{Theorem}[section]
\newtheorem{lemma}[theorem]{Lemma}
\newtheorem{proposition}[theorem]{Proposition}
\theoremstyle{definition}
\newtheorem{definition}[theorem]{Definition}
\theoremstyle{remark}
\newtheorem{remark}[theorem]{Remark}
\numberwithin{equation}{section}
\def\F{{\mathbb F}}
\def \Z {{\mathbb Z}}
\def \R {{\mathbb R}}
\def \C {{\mathbb C}}
\title{Representation Theory of $UT_3(\mathbb{F}_3)$ and its Applications to Equivariant Decomposition in Neural Architectures}
\newcommand\shorttitle{Representations of $UT_3(\F_3)$ and applications}
\author{\large{Bich Van Nguyen$^{\rm a, \dag,\ddag}$, Nguyen Cao Manh Thang $^{\rm b}$}\\[5pt]
	\footnotesize{$^{\rm a}$ Institute for Artificial Intelligence, University of Engineering and Technology, Vietnam National University, Hanoi, Vietnam\\
		$^{\rm b}$Department of Fundamental and Applied Sciences\\
		University of Science and Technology of Hanoi, Vietnam\\[5pt]
		}}
\newcommand\shortname{B.V. Nguyen, C.M.T. Nguyen}
\date{}
\date{}
\begin{document}
\noindent\hrulefill\\
\begin{minipage}{2.5cm}
\includegraphics[scale = 0.25]{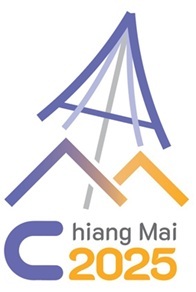}
\end{minipage}
\begin{minipage}{11.5cm}
\begin{center}
The Asian Mathematical Conference (AMC) 2025\\
Department of Mathematics, Faculty of Science\\ 
Chiang Mai University, Chiang Mai, Thailand\\
August 3--7, 2025
\end{center}
\end{minipage}
\hrule height 2pt\hfill\\

\begin{center}
\textbf{\LARGE\thetitle\let\thefootnote\relax}\\[0.5cm]
\theauthor\let\thefootnote\relax\footnotetext{$^\dag$Corresponding author.}\let\thefootnote\relax\footnotetext{$^\ddag$Speaker.}\let\thefootnote\relax\footnotetext{E-mail address: nbvan@vnu.edu.vn (B. V. Nguyen), manhthang190605@gmail.com (C.M.T.Nguyen).}
\vspace{1cm}
\end{center}

\thispagestyle{empty}
\begin{abstract} 
	In this paper we prove theorems characterizing the decomposition of equivariant feature spaces, filters and a structural preservation theorem for invariant subspace chains in group equivariant convolutional neural networks(G-CNN). Furthermore, we give explicit matrix forms for irreducible representations of $UT_3(\F_3)$-the unitriangular matrix groups over the field with three elements. These results provide a foundation for designing new G-CNN architectures via representations of $UT_3(\F_3)$ that respect deep algebraic structure, with potential applications in symbolic visual learning.

\end{abstract}
{\bf Keywords:} representation, neural network, feature space, decomposition, filter.\\[5pt]
{\bf 2020 MSC:} 20C35; 

\section{Introduction}\label{Sec: Intro}

In recent years, group-equivariant convolutional neural networks (G-CNNs) have emerged as a powerful tool for incorporating symmetry into deep learning models. Originally proposed by Cohen and Welling~\cite{cohen2016}, G-CNNs extend classical convolutional neural networks by replacing translations with actions of more general groups, thereby enhancing learning efficiency and generalization on structured data. 


Several successful models, such as SO(2)-CNNs, SE(2)-CNNs, and discrete \( D_8 \)-equivariant networks, have shown significant improvement in image classification, segmentation, and molecular modeling tasks\cite{weiler2019, kondor2018, esteves2020}. More recently, Cohen et al \cite{cohen2019} and Kondor \& Trivedi \cite{kondor2018} developed frameworks for general group convolution over compact Lie groups using harmonic analysis and representation theory. These works highlight the central role of irreducible representations and feature space decompositions in the design of equivariant networks.

However, many real-world data domains exhibit more intricate local symmetries that cannot be captured adequately by the aforementioned groups. One such domain is handwritten mathematical expression recognition, where symbols exhibit local affine distortions, nested substructures, and partial symmetries. These patterns suggest the potential of modeling local transformations using non-abelian nilpotent groups—a class that has so far received little attention in equivariant deep learning.

This paper focuses on the explicit construction and classification of the irreducible complex representations of $\mathrm{UT}_3(\mathbb{F}_3)$, a non-abelian nilpotent group of order 27. We go beyond classification to explore how these representations induce structure-preserving decompositions of equivariant spaces. As a concrete motivation, we connect these ideas with group actions on feature spaces in neural network models, albeit from a purely algebraic perspective without requiring implementation.


\section{Preliminaries}\label{Sec: Prelim}

\subsection{Group representations and completely reducibility}
In this subsection we briefly review the necessary concepts from group representation theory which were presented in details in \cite{Serre1977},\cite{Etingof2011} and will serve as the mathematical foundation for our proposed G-CNN model.

\begin{definition}[Group actions]
	Let $G$ be a group and $X$ be a set. An \textbf{action} of $G$ on $X$ is a map $\alpha: G\times X\to X$ such that 
	\begin{enumerate}
		\item $\alpha(e,x)=x\quad \forall x\in X.$
		\item $\alpha(h,\alpha(g,x))=\alpha(hg,x)\quad \forall g,h\in G,\forall x\in X.$
	\end{enumerate}
	We often use the notation $g\circ x$ (or $gx$) instead of $\alpha(g,x)$. Then the above properties can be written as 
	\begin{enumerate}
		\item[1'] $e\circ x=x$
		\item[2'] $h\circ (g\circ x)=(hg)\circ x.$
	\end{enumerate}
\end{definition}
\begin{definition}
	Let $G$ act on 2 sets $X$ and $Y$. Let $\varphi$ be a map from $X$ to $Y$. 
	\begin{enumerate}
		\item $\varphi$ is \textbf{invariant} to the group action if $\varphi(g\circ x)=\varphi (x)\quad \forall g\in G, x\in X.$
		\item $\varphi$ is \textbf{equivariant} to the group action if $\varphi(g\circ x)=g\circ \varphi(x)\quad \forall g\in G,x\in X.$
	\end{enumerate}
	
\end{definition}
The group representation is a mathematical tool to describe a group in
terms of linear transformations of a vector space. It reduces various group-theoretic problems to linear algebra or matrix theory.
\begin{definition}
Let $G$ be a group and $V$ be a vector space over a field $\F$. Let $GL(V)$ be the group of invertible linear transformations of $V$. 

A \textbf{representation} of $G$ on $V$ is a group homomorphism: $\rho: G\to GL(V)$, i.e. $\rho(ab)=\rho(a)\rho(b)\quad\forall a,b\in G$. 

We often write (p, V) to specify a representation and dim(V) is said to be the \textbf{dimension} or \textbf{degree} of this representation. 

 \end{definition}
 \begin{remark}
 		A group representation $\rho: G\to GL(V)$ defines a linear group action of $G$ on $X=V$ by $g\circ x=\rho(g)x.$
 \end{remark}
 \begin{definition}
 	Let $G$ be a group. Let $(\rho,V),(\rho',V')$ be representations of $G$. A linear map $T: V\to V'$ is called a \textbf{$G$-linear} map (or \textbf{$G$-equivariant} map, or \textbf{$G$-map}, or \textbf{an intertwiner}) if $\rho'(g)(T(v))=T(\rho(g)(v)) $ for any $g\in G,v\in V.$
 
 The vector space of $G$-maps between $V$ and $V'$ is denoted by $Hom_G(V,V').$  
 	
 	A $G$-map $T$ is called an \textbf{isomorphism} if it is invertible.
 	
 	Two representations $(\rho,V)$ and $(\rho',V')$ are isomorphic if there is an isomorphism $T: V\to V'$.
 \end{definition}
 \begin{lemma}
 	[Schur's Lemma]\label{schur}
 	Let $(\rho_1,V_1),(\rho_2,V_2)$ be two irreducible representations of a group $G$ and let $T:V_1\to V_2$ be a $G$-map, i.e. $\rho_2(g)\circ T=T\circ \rho_1(g)$ for all $g\in G$. Then:
 	\begin{enumerate}
 		\item If $\rho_1$ and $\rho_2$ are not isomorphic, then $T=0$.
 		\item If $V_1=V_2=V$ and $\rho_1=\rho_2$, then $T=\lambda Id_{V}.$
 	\end{enumerate}
 \end{lemma}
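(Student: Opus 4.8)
The plan is to exploit the single most important structural fact about $G$-maps: for any intertwiner $T \in \mathrm{Hom}_G(V_1, V_2)$, both $\ker(T)$ and $\mathrm{im}(T)$ are $G$-invariant subspaces. For the kernel, if $v \in \ker(T)$ then $T(\rho_1(g)v) = \rho_2(g)T(v) = 0$, so $\rho_1(g)v \in \ker(T)$; for the image, any $T(v)$ satisfies $\rho_2(g)T(v) = T(\rho_1(g)v) \in \mathrm{im}(T)$. This is the only real computation needed, and everything else follows by invoking irreducibility.

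For part (1), I would argue by contraposition. Since $V_1$ is irreducible, $\ker(T) \in \{0, V_1\}$; since $V_2$ is irreducible, $\mathrm{im}(T) \in \{0, V_2\}$. If $T \neq 0$, then $\ker(T) \neq V_1$ forces $\ker(T) = 0$ (so $T$ is injective), and $\mathrm{im}(T) \neq 0$ forces $\mathrm{im}(T) = V_2$ (so $T$ is surjective). Hence $T$ is a bijective $G$-map, i.e. an isomorphism $\rho_1 \cong \rho_2$, contradicting the hypothesis. Therefore $T = 0$.

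For part (2), I would reduce to part (1) by shifting $T$ by a scalar. Working over $\C$ (algebraically closed and with $V$ finite-dimensional), the endomorphism $T$ has an eigenvalue $\lambda$. The map $S := T - \lambda\, \mathrm{Id}_V$ is again a $G$-map, since $\mathrm{Id}_V$ intertwines $\rho$ with itself and the space of intertwiners is closed under scalar multiples and differences. By choice of $\lambda$, the kernel $\ker(S)$ contains a nonzero eigenvector, so $\ker(S) \neq 0$; irreducibility of $V$ then forces $\ker(S) = V$, whence $S = 0$ and $T = \lambda\, \mathrm{Id}_V$.

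The one step requiring care, and the place where the hypotheses genuinely matter, is the existence of the eigenvalue $\lambda$ in part (2): this is exactly where algebraic closedness of the ground field is indispensable. Over $\C$ the characteristic polynomial of $T$ has a root, so the argument goes through; over a non-closed field the conclusion can fail. Since the representations under consideration are complex, no obstacle arises, but I would state explicitly that $V$ is a finite-dimensional $\C$-vector space to justify invoking the eigenvalue.
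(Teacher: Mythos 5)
Your proof is correct. The paper itself gives no proof of this lemma---it simply cites Serre's \emph{Linear Representations of Finite Groups}---and your argument (showing $\ker(T)$ and $\mathrm{im}(T)$ are $G$-invariant, applying irreducibility, then handling part (2) by subtracting an eigenvalue $\lambda$ of $T$ over $\mathbb{C}$) is precisely the standard proof found in that reference, including your correct remark that algebraic closedness and finite-dimensionality of $V$ are exactly the hypotheses that make the eigenvalue step work.
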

 The proof of this lemma can be found in \cite{Serre1977}. 
 \begin{definition}
 	If a subspace $W\subset V$ of a representation $(\rho,V)$ satisfies $\rho(g)W\subset W$ for any $g\in G$, then the restriction $\rho|_W: G\to GL(W)$ is called a \textbf{subrepresentation} of $(\rho,V)$. 
 	
 	A representation $(\rho,V)$ is \textbf{reducible} if there is a non-trivial
 	subrepresentation (i.e., there is a subspace $W\subset V, W\neq 0,V$ such that $\rho(g)W\subset W$ for any $g\in G.$
 	
 	If a representation is not reducible, then it is called \textbf{irreducible}.
 \end{definition}
	\begin{definition}
	The direct sum of representations $(\rho_1,V_1)$ and $(\rho_2,V_2)$ is defined by $$\rho_1\oplus \rho_2: G\to V_1\oplus V_2, g\mapsto \rho_1(g)\oplus \rho_2(g)$$ and 
	\begin{equation}
		\begin{pmatrix}
			\rho_1(g)&0\\0&\rho_2(g)
		\end{pmatrix}\begin{pmatrix}v_1\\v_2\end{pmatrix}=\begin{pmatrix}
			\rho_1(g)v_1\\\rho_2(g)v_2
		\end{pmatrix}
	\end{equation}
	for all pairs $(v_1,v_2)\in V_1\oplus V_2.$
\end{definition}
	\begin{definition}\label{sem}
	A representation $\rho$ is called \textbf{completely reducible} (or \textbf{semisimple}) if it is
	isomorphic to a direct sum of irreducible representations: $\rho\cong \rho_1\oplus \rho_2\oplus...\oplus \rho_k.$
	
	In terms of matrices, we can make it simultaneously block-diagonal so that
	each block corresponds to an irreducible representation, i.e. there exists an invertible matrix $P$ such that 
	\begin{equation}
		P^{-1}\rho(g)P=\begin{pmatrix}
			\rho_1(g)&0&0&...&0\\
			0&\rho_2(g)&0&...&0\\
			\vdots&\vdots&\vdots&\vdots&\vdots\\
			0&0&0&...&\rho_k(g)
		\end{pmatrix},\quad \forall g\in G
	\end{equation}
	where $P$ does not depend on $g$.
	\end{definition}
	For simplicity we recall only a particular case of Maschke's theorem, namely the theorem for real and complex representations:
	\begin{theorem}\label{maschke}\cite{Serre1977}
		Every finite-dimensional complex (or real) representation of a finite group is completely reducible. 
	\end{theorem}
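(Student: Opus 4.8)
The plan is to establish the key intermediate claim that every invariant subspace $W \subset V$ admits a $G$-invariant complement, that is, an invariant subspace $W'$ with $V = W \oplus W'$. Once this is available, complete reducibility follows by a straightforward induction on $\dim V$: if $V$ is irreducible we are done; otherwise we pick a proper nonzero invariant subspace $W$, split off an invariant complement $W'$, and apply the inductive hypothesis to the subrepresentations on $W$ and $W'$, assembling the resulting irreducible pieces into the desired direct sum.

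The heart of the argument is the \emph{averaging trick}. Starting from any linear projection $\pi_0 : V \to V$ onto $W$ (which exists by choosing an arbitrary vector-space complement of $W$ and projecting along it, ignoring the group action entirely), I would symmetrize it over $G$ by setting
$$\pi = \frac{1}{|G|} \sum_{g \in G} \rho(g)\, \pi_0\, \rho(g)^{-1}.$$
The crucial observation is that this formula makes sense precisely because $|G|$ is invertible in $\R$ or $\C$; this is exactly the point where the characteristic-zero hypothesis enters and cannot be dropped.

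The verification then proceeds in three short steps. First, since each $\rho(g)$ preserves $W$ and $\pi_0$ takes values in $W$, the averaged map $\pi$ still maps into $W$. Second, for $w \in W$ one has $\rho(g)^{-1} w \in W$, so $\pi_0 \rho(g)^{-1} w = \rho(g)^{-1} w$, and averaging gives $\pi w = w$; combined with the first point this shows $\pi$ is a projection onto $W$. Third, a direct reindexing of the sum shows $\rho(h)\pi = \pi\rho(h)$ for every $h \in G$, so $\pi$ is a $G$-map. Setting $W' = \ker \pi$ then yields the required invariant complement, since equivariance forces $\rho(g)(\ker \pi) \subset \ker \pi$ and $V = \operatorname{im}\pi \oplus \ker\pi = W \oplus W'$.

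The main obstacle is conceptual rather than computational: recognizing that complete reducibility reduces to the existence of invariant complements, and that the naive complement need not be invariant, so one must repair it by averaging. Everything else is routine linear algebra. An alternative for the complex case is to average an arbitrary Hermitian inner product to obtain a $G$-invariant one, making each $\rho(g)$ unitary so that orthogonal complements are automatically invariant; I would mention this as a cleaner variant but carry out the projection-averaging version, since it treats the real and complex cases uniformly.
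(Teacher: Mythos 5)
Your proof is correct and follows exactly the classical averaging argument (symmetrize a linear projection onto the invariant subspace, take its kernel as an invariant complement, then induct on dimension), which is precisely the proof in Serre's book that the paper cites for this theorem rather than proving it itself. The characteristic-zero remark and the unitarization variant you mention are both accurate; there is nothing to fix.
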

\subsection{Character theory}
	\begin{definition}
	Let $\rho: G\to GL(V)$ be a finite dimensional representation of a group $G$. For each $g\in G$, let $\chi_\rho(g):=Tr(\rho(g)).$
	
	The function $\chi:G\to \F, g\mapsto \chi_\rho(g)$ is called the \textbf{character} of $\rho$. The character of an irreducible representation of $G$ is called an \textbf{irreducible character} of $G$. 
\end{definition}
The importance of this function comes primarily from the fact that it characterizes the representation $\rho$. 

\begin{proposition}[Properties of characters]\label{char4}\cite{Serre1977}
	
	Let $\rho: G\to GL(V)$ be a complex finite dimensional representation of a finite group $G$. Then:
	\begin{enumerate}
		\item $\chi_\rho(e)=dim(\rho)=dim(V).$
			\item $\chi_\rho(g^{-1})=\overline{\chi_\rho(g)}$ (the complex conjugate).
		\item $\chi_\rho(hgh^{-1})=\chi_\rho(g)$ for all $g,h\in G$. 
	\end{enumerate}
\end{proposition}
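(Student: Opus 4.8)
The plan is to prove the three properties in increasing order of difficulty, relying only on elementary properties of the trace together with the homomorphism property of $\rho$ and---crucially for the second item---the finiteness of $G$. For the first property, I would note that since $\rho$ is a homomorphism it must send $e\in G$ to the identity operator $Id_V\in GL(V)$, and the trace of the identity on an $n$-dimensional space equals $n$; hence $\chi_\rho(e)=Tr(Id_V)=\dim(V)=\dim(\rho)$.

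For the third property, I would invoke the conjugation invariance (cyclic property) of the trace. Using the homomorphism property to write $\rho(hgh^{-1})=\rho(h)\rho(g)\rho(h)^{-1}$, and the identity $Tr(ABA^{-1})=Tr(B)$, one obtains $\chi_\rho(hgh^{-1})=Tr(\rho(h)\rho(g)\rho(h)^{-1})=Tr(\rho(g))=\chi_\rho(g)$. This step is purely formal and requires no hypothesis beyond $\rho$ being a representation; it also exhibits $\chi_\rho$ as a class function.

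The second property is where the finiteness of $G$ genuinely enters, and I expect it to be the main obstacle. I would argue that every $g\in G$ has finite order $m$ (dividing $|G|$), so $\rho(g)^m=\rho(g^m)=\rho(e)=Id_V$; thus $\rho(g)$ is annihilated by $X^m-1$, a polynomial with distinct roots, and is therefore diagonalizable with eigenvalues $\lambda_1,\dots,\lambda_n$ that are $m$-th roots of unity. In particular $|\lambda_i|=1$, whence $\lambda_i^{-1}=\overline{\lambda_i}$. Since the eigenvalues of $\rho(g^{-1})=\rho(g)^{-1}$ are exactly $\lambda_1^{-1},\dots,\lambda_n^{-1}$, I would conclude
\[
\chi_\rho(g^{-1})=\sum_{i=1}^{n}\lambda_i^{-1}=\sum_{i=1}^{n}\overline{\lambda_i}=\overline{\sum_{i=1}^{n}\lambda_i}=\overline{\chi_\rho(g)}.
\]
As an alternative route I would mention the averaging argument: starting from any fixed Hermitian form $\langle\cdot,\cdot\rangle_0$ on $V$ and setting $\langle u,v\rangle:=\frac{1}{|G|}\sum_{h\in G}\langle \rho(h)u,\rho(h)v\rangle_0$ produces a $G$-invariant inner product, so each $\rho(g)$ is unitary; then $\rho(g^{-1})=\rho(g)^{-1}=\rho(g)^{*}$ and taking traces gives $\chi_\rho(g^{-1})=Tr(\rho(g)^{*})=\overline{Tr(\rho(g))}=\overline{\chi_\rho(g)}$. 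Both routes rest essentially on finiteness---through the order of $g$ in the eigenvalue argument, and through the finite averaging sum in the unitarity argument---which is the only real content here; the rest is bookkeeping with the trace.
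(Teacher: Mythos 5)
Your proposal is correct. Note that the paper itself states this proposition without proof, citing Serre, so there is no in-paper argument to compare against; your primary route for item 2 --- $\rho(g)$ has finite order, hence is diagonalizable with root-of-unity eigenvalues satisfying $\lambda_i^{-1}=\overline{\lambda_i}$ --- together with $\rho(e)=Id_V$ for item 1 and the conjugation invariance of the trace for item 3, is exactly the standard argument given in Serre, and your alternative unitarization (averaging) route is equally valid.
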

It is easy to see that $\chi_{\rho_1\oplus \rho_2}(g)=Tr\begin{pmatrix}
	\rho_1(g)&0\\
	0&\rho_2(g)
\end{pmatrix}=Tr(\rho_1(g))+Tr(\rho_2(g))=\chi_{\rho_1}(g)+\chi_{\rho_2}(g)$ for any $g\in G$. So 
\begin{equation}\label{sum}
	\chi_{\rho_1\oplus \rho_2}=\chi_{\rho_1}+\chi_{\rho_2}.
\end{equation}
	\begin{definition}
	Let $\rho$ and $\rho'$ be 2 complex representations of a finite group $G$. We define the scalar product of $\chi_{\rho}$ and $\chi_{\rho'}$ by
	\begin{equation}
		\langle \chi_{\rho},\chi_{\rho'}\rangle=\frac{1}{|G|}\sum_{g\in G}\chi_{\rho}(g)\overline{\chi_{\rho'}(g)}.
	\end{equation}
\end{definition}

\begin{theorem}\label{ort1}\cite{Serre1977}
	a)	If $\rho$ and $\rho'$ are 2 nonisomorphic irreducible representations of a finite group $G$, then $\langle \chi_{\rho},\chi_{\rho'}\rangle=0.$
	
	b) $\rho$ is an irreducible representation of a finite group $G$ iff $\langle \chi_\rho,\chi_\rho\rangle=1.$
\end{theorem}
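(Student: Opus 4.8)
The plan is to derive both statements from Schur's Lemma (Lemma~\ref{schur}) through the standard averaging construction, and then to obtain the converse half of (b) from Maschke's Theorem (Theorem~\ref{maschke}). Fix representations $(\rho,V)$ and $(\rho',V')$ with $\dim V=n$, choose bases, and write $\rho(g)_{ij}$ and $\rho'(g)_{kl}$ for the matrix coefficients. For an arbitrary linear map $h\colon V'\to V$ I would form the average
\[
\widetilde h=\frac{1}{|G|}\sum_{g\in G}\rho(g)\,h\,\rho'(g)^{-1},
\]
which is well defined because $|G|$ is invertible over $\C$. A short reindexing of the sum shows $\rho(s)\widetilde h=\widetilde h\,\rho'(s)$ for every $s\in G$, so $\widetilde h\in\mathrm{Hom}_G(V',V)$ and Schur's Lemma applies.

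Next I would let $h=E_{ab}$ run over the matrix units and read off entries. Since $(\rho(g)E_{ab}\rho'(g)^{-1})_{kl}=\rho(g)_{ka}\,\rho'(g^{-1})_{bl}$, the $(k,l)$ entry of $\widetilde h$ equals $\tfrac1{|G|}\sum_g \rho(g)_{ka}\rho'(g^{-1})_{bl}$. When $\rho\not\cong\rho'$ Schur forces $\widetilde h=0$, yielding
\[
\frac1{|G|}\sum_{g\in G}\rho(g)_{ka}\,\rho'(g^{-1})_{bl}=0\qquad\text{for all }a,b,k,l.
\]
When $\rho'=\rho$ is irreducible Schur gives $\widetilde h=\lambda\,\mathrm{Id}$; taking traces, $\mathrm{Tr}(\widetilde h)=\mathrm{Tr}(h)=\mathrm{Tr}(E_{ab})=\delta_{ab}$ while $\mathrm{Tr}(\lambda\,\mathrm{Id})=\lambda n$, so $\lambda=\delta_{ab}/n$ and hence
\[
\frac1{|G|}\sum_{g\in G}\rho(g)_{ka}\,\rho(g^{-1})_{bl}=\frac1n\,\delta_{ab}\,\delta_{kl}.
\]

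To pass to characters I would use property~(2) of Proposition~\ref{char4} to replace $\overline{\chi_{\rho'}(g)}$ by $\chi_{\rho'}(g^{-1})$ and expand each character as a sum of diagonal matrix coefficients, giving
\[
\langle\chi_\rho,\chi_{\rho'}\rangle=\sum_{i,k}\frac1{|G|}\sum_{g\in G}\rho(g)_{ii}\,\rho'(g^{-1})_{kk}.
\]
For part (a) every inner sum vanishes by the first relation, so $\langle\chi_\rho,\chi_{\rho'}\rangle=0$. For the forward implication of (b), with $\rho'=\rho$ irreducible the second relation specializes to $\tfrac1{|G|}\sum_g\rho(g)_{ii}\rho(g^{-1})_{kk}=\tfrac1n\delta_{ik}$, whence $\langle\chi_\rho,\chi_\rho\rangle=\sum_{i,k}\tfrac1n\delta_{ik}=1$. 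For the converse, take $\rho$ arbitrary with $\langle\chi_\rho,\chi_\rho\rangle=1$; by Maschke's Theorem $\rho\cong\bigoplus_i m_i\tau_i$ with the $\tau_i$ pairwise non-isomorphic irreducibles and $m_i\ge 1$, so by~\eqref{sum} $\chi_\rho=\sum_i m_i\chi_{\tau_i}$. Combining part (a) with the norm-one computation just obtained gives $\langle\chi_\rho,\chi_\rho\rangle=\sum_i m_i^2$, and since the $m_i$ are positive integers this equals $1$ precisely when there is a single summand of multiplicity one, i.e.\ when $\rho$ is irreducible.

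I expect the only genuine friction to lie in the middle step: correctly pinning down the scalar $\lambda=\mathrm{Tr}(h)/n$ through the trace, and then threading the bilinear matrix-coefficient identities through the double sum so as to recover exactly $\langle\chi_\rho,\chi_{\rho'}\rangle$ rather than a mislabelled variant. The conceptual content rests entirely on Schur's Lemma and the $G$-invariance of the averaged operator $\widetilde h$; once the two orthogonality relations for matrix coefficients are in hand, both (a) and the forward direction of (b) are immediate, and the converse of (b) follows from Maschke's Theorem together with the integrality of the multiplicities.
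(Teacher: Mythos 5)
Your proof is correct. Note that the paper itself gives no proof of this theorem --- it is stated as a quoted result from Serre \cite{Serre1977} --- and your argument (averaging an arbitrary linear map to produce an element of $\mathrm{Hom}_G(V',V)$, applying Schur's Lemma (Lemma \ref{schur}) to matrix units to obtain the two orthogonality relations for matrix coefficients, summing diagonal coefficients to recover $\langle\chi_\rho,\chi_{\rho'}\rangle$, and invoking Maschke's Theorem (Theorem \ref{maschke}) together with integrality of the multiplicities for the converse direction of (b)) is precisely the standard proof found in that cited reference, so it matches the paper's intended source rather than diverging from it.
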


From this theorem one deduces that irreducible characters of a finite group form an orthonormal system.

Let $(\rho_i,V_i)$ ($i=\overline{1,r})$ be all the irreducible complex representations (up to isomorphisms, ordered with non decreasing dimensions) of a finite group $G$. 
Suppose that $(\rho,V)$ be a representation of $G$. Following Maschke's Theorem we have the unique decomposition
\begin{equation}
	V=\oplus_{i=1}^rV_i^{\oplus m_i}.
\end{equation}

$(m_1,m_2,...,m_r)$ is called the \textbf{type} of $\rho$.
For each $\rho_j$ we have
\begin{theorem}\label{char}\cite{Serre1977}\begin{equation}
		\langle \chi_\rho,\chi_{\rho_j}\rangle=m_j.
\end{equation}\end{theorem}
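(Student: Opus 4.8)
The plan is to compute $\langle \chi_\rho, \chi_{\rho_j}\rangle$ directly by substituting the decomposition $V = \oplus_{i=1}^r V_i^{\oplus m_i}$ into the character and then exploiting bilinearity of the scalar product together with the orthonormality relations from Theorem~\ref{ort1}. First I would use the additivity of characters under direct sums, equation~\eqref{sum}, extended inductively to a finite direct sum with multiplicities, to write
\begin{equation}
\chi_\rho = \sum_{i=1}^r m_i\, \chi_{\rho_i}.
\end{equation}
This is the crucial reduction: the character of $\rho$ is a nonnegative integer combination of the irreducible characters, with the multiplicities $m_i$ as coefficients.

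Next I would insert this expression into the scalar product and use that $\langle -,-\rangle$ is linear in its first argument (and conjugate-linear in the second, though here the coefficients $m_i$ are real integers, so no conjugation issue arises). This yields
\begin{equation}
\langle \chi_\rho, \chi_{\rho_j}\rangle = \left\langle \sum_{i=1}^r m_i\,\chi_{\rho_i},\, \chi_{\rho_j}\right\rangle = \sum_{i=1}^r m_i\, \langle \chi_{\rho_i}, \chi_{\rho_j}\rangle.
\end{equation}
The linearity of the scalar product in the first slot follows immediately from its definition as a finite sum $\frac{1}{|G|}\sum_{g\in G}(\cdot)(g)\overline{\chi_{\rho_j}(g)}$, since a sum of functions evaluated pointwise distributes over the outer summation.

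The final step is to invoke Theorem~\ref{ort1}, which states that the irreducible characters form an orthonormal system: $\langle \chi_{\rho_i}, \chi_{\rho_j}\rangle = \delta_{ij}$ (equal to $1$ when $\rho_i\cong\rho_j$, i.e.\ $i=j$ since we listed the irreducibles up to isomorphism, and $0$ otherwise). Substituting this Kronecker delta collapses the sum to the single surviving term $m_j$, giving $\langle \chi_\rho, \chi_{\rho_j}\rangle = m_j$ as claimed.

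I do not anticipate a genuine obstacle here, since every ingredient is already available in the excerpt; the only point requiring a small amount of care is justifying that equation~\eqref{sum}, stated for a direct sum of two representations, extends to an arbitrary finite direct sum with repeated summands. This is a routine induction on the number of summands (grouping $V_i^{\oplus m_i}$ as $m_i$ copies of $V_i$), and I would state it as such rather than belabor it. A secondary technical remark worth making explicit is that the coefficients $m_i$ are real, so pulling them out of the conjugate-linear second argument is never needed; they appear only in the first argument where linearity is unambiguous.
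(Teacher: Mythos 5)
Your proposal is correct and follows essentially the same route as the paper: expand $\chi_\rho=\sum_{i} m_i\chi_{\rho_i}$ via the additivity of characters under direct sums (equation~\eqref{sum}), use linearity of the scalar product in the first argument, and collapse the sum with the orthonormality relations $\langle\chi_{\rho_i},\chi_{\rho_j}\rangle=\delta_{ij}$ from Theorem~\ref{ort1}. Your added remarks on the induction behind extending~\eqref{sum} to multiple summands and on the coefficients being real are careful touches the paper leaves implicit, but they do not change the argument.
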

\begin{proof}
	Following \eqref{sum} and Theorem \ref{ort1} we have:
	\begin{equation}
		\langle \chi_\rho,\chi_{\rho_j}\rangle=\langle\sum_{i=1}^nm_i\chi_{\rho_i},\chi_{\rho_j}\rangle=\sum_{i=1}^nm_i\langle\chi_{\rho_i},\chi_{\rho_j}\rangle=m_i\delta_{ij}=m_j.
	\end{equation}
\end{proof}
\begin{definition}
	Let $G$ be a group and $g_1,g_2\in G$. We say that $g_2$ is conjugated to $g_1$ if there exists $h\in G$ such that $g_2=h^{-1}g_1h.$
\end{definition}

It is easy to see that the conjugacy relation is an equivalence relation. The equivalence class of element $g$ is called the \textbf{conjugacy class} of $g$.

The following theorem allows us to find the number of nonisomorphic irreducible representations of a group $G$. 
	\begin{theorem}\label{nirr}\cite{Serre1977}
	The number of nonisomorphic irreducible representations of a finite group is equal to the number of its distinct conjugacy classes. 
\end{theorem}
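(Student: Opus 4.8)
The plan is to show that the irreducible characters form an orthonormal basis of the space of \emph{class functions} on $G$ (the complex-valued functions constant on each conjugacy class), after which the theorem follows by comparing dimensions. First I would note that the space $\mathcal{C}$ of class functions has dimension equal to the number of conjugacy classes, since such a function is freely and independently specified by one value on each class. By Proposition \ref{char4}(3) each irreducible character $\chi_{\rho_i}$ lies in $\mathcal{C}$, and by Theorem \ref{ort1} the characters $\chi_{\rho_1},\dots,\chi_{\rho_r}$ are orthonormal, hence linearly independent; this already gives $r \le \dim\mathcal{C}$.

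The heart of the argument is the reverse inequality, that the irreducible characters span $\mathcal{C}$. I would establish this by proving that any class function $f$ orthogonal to every $\chi_{\rho_i}$ must vanish. Given such an $f$ and any representation $(\rho,V)$, form the averaging operator $T_f = \sum_{g\in G} f(g)\,\rho(g)$. A short computation using that $f$ is a class function gives $\rho(h) T_f \rho(h)^{-1} = T_f$ for all $h\in G$, so $T_f \in \mathrm{Hom}_G(V,V)$. When $\rho$ is irreducible, Schur's Lemma (Lemma \ref{schur}) forces $T_f = \lambda\,\mathrm{Id}_V$, and taking traces yields $\lambda\,\dim V = \sum_{g} f(g)\chi_\rho(g)$.

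Next I would convert the orthogonality hypothesis into a statement about these scalars. Using Proposition \ref{char4}(2), the condition $\langle f,\chi_{\rho_i}\rangle = 0$ rewrites as $\sum_g f(g)\chi_{\rho_{i^*}}(g) = 0$, where $\rho_{i^*}$ is the dual representation (again irreducible); since $i\mapsto i^*$ permutes the irreducibles, in fact $\sum_g f(g)\chi_\rho(g) = 0$ for every irreducible $\rho$. Hence $\lambda = 0$ and $T_f = 0$ on each irreducible. By Maschke's Theorem (Theorem \ref{maschke}) every representation is a direct sum of irreducibles, so $T_f = 0$ on the regular representation of $G$ as well; evaluating it on the basis vector indexed by the identity gives $\sum_g f(g)\,e_g = 0$, whence $f(g)=0$ for all $g$, i.e. $f=0$.

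Combining the two inequalities, the irreducible characters form a basis of $\mathcal{C}$, so $r = \dim\mathcal{C}$ equals the number of conjugacy classes. Finally, since isomorphic representations have equal characters while distinct irreducibles have distinct characters (by Theorem \ref{char}, the character determines all multiplicities $m_j$, hence the representation up to isomorphism), $r$ is exactly the number of nonisomorphic irreducible representations, which completes the proof. I expect the spanning step — the Schur-plus-averaging argument showing that orthogonality to all irreducible characters forces a class function to vanish — to be the main obstacle, since it is where Schur's Lemma, Maschke's Theorem, and the character identities must be assembled together correctly.
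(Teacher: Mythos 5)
Your proof is correct and complete: it is the classical argument that the irreducible characters form an orthonormal basis of the space of class functions, which is precisely the proof in Serre's book that the paper cites for this theorem --- the paper itself states the result without giving a proof. The one delicate step, converting the hypothesis $\langle f,\chi_{\rho_i}\rangle=0$ (which involves $\overline{\chi_{\rho_i}}$, via Proposition~\ref{char4}) into the vanishing of $\sum_{g}f(g)\chi_{\rho}(g)$ for \emph{every} irreducible $\rho$ by passing to the dual representation, is handled correctly, as is the final identification of the number of orthonormal characters with the number of isomorphism classes.
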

\subsection{Induced representations}
In this subsection we describe how we can construct a representation of a finite group $G$ called the \textbf{induced representaion} from a representation of a subgroup $H$ of $G$. There are several equivalent definitions of induced representations (\cite{Serre1977}, \cite{Etingof2011}). In this paper, for simplicity we choose the following definition:
\begin{definition}\cite{Etingof2011}
Let $H\leq G.$ Given a representation $\rho_W: H\to GL(W)$ of $H$. 

The induced representation $Ind_H^G(\rho_W)$ is the representation $G\to GL(V)$ where $V=Ind_H^G(W)=\{f:G\to W|f(hx)=\rho_W(h)f(x) \forall x\in G,h\in H\}$ such that $gf(x)=f(xg).$ 
\end{definition}
\begin{remark}\label{ind6}
	Notice that if we choose a representative $x_\sigma$ from every right H-coset $\sigma$ of G, then any $f\in V$ is uniquely determined
	by $\{f(x_\sigma)\}.$ 
	
	So $dim(V)=dim(Ind_H^G(W))=dim(W)[G:H].$
\end{remark}
The following theorem allows us to find the characters of the induced representation:
\begin{theorem}
	Let $\theta: H\to GL(W)$ be a representation of $H\subset G$ and let $R$ be the set of representatives of $G/H$. Then, for each $u\in G$ we have
	\begin{equation}\label{char3}
		Ind_H^G(\chi_\theta)(u)=\chi_{Ind_H^G(\theta)}(u)=\sum_{r\in R,r^{-1}ur\in H}\chi_\theta(r^{-1}ur)=\sum_{s\in G;s^{-1}us\in H}\chi_\theta(s^{-1}us).
	\end{equation} 
\end{theorem}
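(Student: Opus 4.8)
The plan is to obtain the character by writing out an explicit matrix for the action of $u$ on $V=Ind_H^G(W)$ and reading off its trace. First I would fix a set $R=\{r_1,\dots,r_n\}$ of coset representatives of $G/H$, where $n=[G:H]$, and a basis of $W$. By Remark~\ref{ind6} every $f\in V$ is uniquely determined by the tuple $(f(r_1),\dots,f(r_n))$, so the evaluation map $f\mapsto(f(r_i))_i$ is a vector-space isomorphism $V\cong W^{\oplus n}$; under this identification I will view $\rho(u):=Ind_H^G(\theta)(u)$ as an $n\times n$ array of blocks, each of size $\dim W$.

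Next I would track how $u$ permutes the blocks. Using the given action $(uf)(x)=f(xu)$ and writing, for each $i$, the coset decomposition $r_iu=h_i\,r_{\pi(i)}$ with $h_i\in H$ and $r_{\pi(i)}\in R$ (unique by the coset property), the defining relation $f(hx)=\theta(h)f(x)$ gives $(uf)(r_i)=\theta(h_i)\,f(r_{\pi(i)})$. Thus $\rho(u)$ is a \emph{monomial} block matrix: in the $i$-th block-row the only nonzero block sits in block-column $\pi(i)$ and equals $\theta(h_i)$. Taking the trace, a block contributes to $Tr(\rho(u))$ precisely when it is diagonal, i.e. when $\pi(i)=i$, which is equivalent to $r_iu r_i^{-1}\in H$; relabelling the representatives by their inverses turns this into the condition $r^{-1}ur\in H$ and the diagonal block into $\theta(r^{-1}ur)$, whose trace is $\chi_\theta(r^{-1}ur)$. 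Summing over the surviving indices yields the first sum in \eqref{char3}. The key point needing care here is that the genuinely off-diagonal blocks ($\pi(i)\ne i$) contribute nothing to the trace, and that the left/right coset convention is matched to the action $(gf)(x)=f(xg)$ so that the conjugation comes out as $r^{-1}ur$ rather than $ru r^{-1}$ — this bookkeeping is the main place an error could creep in.

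Finally, for the equality with the sum over all $s\in G$, I would partition $G$ into the cosets $rH$ with $r\in R$. Writing $s=rh$ gives $s^{-1}us=h^{-1}(r^{-1}ur)h$, which lies in $H$ if and only if $r^{-1}ur$ does, and by the class-function property (Proposition~\ref{char4}, part 3) satisfies $\chi_\theta(s^{-1}us)=\chi_\theta(r^{-1}ur)$. Hence all $|H|$ elements of a surviving coset contribute the identical value, so the sum over $G$ equals $|H|$ times the sum over $R$. I therefore expect the last expression in \eqref{char3} to require the normalizing factor $\tfrac{1}{|H|}$, i.e. $\sum_{r\in R,\,r^{-1}ur\in H}\chi_\theta(r^{-1}ur)=\tfrac{1}{|H|}\sum_{s\in G,\,s^{-1}us\in H}\chi_\theta(s^{-1}us)$, which is the standard Frobenius induced-character formula.
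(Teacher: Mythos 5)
Your proof is correct, and in fact the paper gives no proof of this theorem at all --- it is quoted as a standard result (it is Theorem 12 in Serre's book, cited as \cite{Serre1977}), so your argument supplies missing details rather than competing with an alternative route. Your block-monomial-matrix computation is the standard one and is properly matched to the paper's conventions: the paper's induced space consists of functions satisfying $f(hx)=\theta(h)f(x)$ with action $(uf)(x)=f(xu)$, so functions are determined by their values on \emph{right}-coset representatives, and your relabelling $r\mapsto r^{-1}$ is exactly the bookkeeping needed to convert the resulting condition $rur^{-1}\in H$ into the left-coset form $r^{-1}ur\in H$ appearing in the statement.

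More importantly, your final observation is a genuine catch: as printed, the last equality of \eqref{char3} is false unless $H$ is trivial. Partitioning $G$ into cosets $rH$, each surviving coset contributes $|H|$ equal terms, since for $s=rh$ one has $s^{-1}us=h^{-1}(r^{-1}ur)h\in H$ iff $r^{-1}ur\in H$, with $\chi_\theta(s^{-1}us)=\chi_\theta(r^{-1}ur)$ by the class-function property. Hence
\begin{equation*}
\sum_{s\in G,\; s^{-1}us\in H}\chi_\theta(s^{-1}us)\;=\;|H|\sum_{r\in R,\; r^{-1}ur\in H}\chi_\theta(r^{-1}ur),
\end{equation*}
so the correct statement (as in Serre) must carry the normalizing factor $\tfrac{1}{|H|}$ in front of the sum over $s\in G$; the paper's statement should be corrected accordingly.
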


\subsection{From CNN to G-CNN and Steerable CNNs}
In this framework,  a feature map is a function defined on a domain \( X \), \( f: X \to V \), where \( V \) is a vector space. For classical convolutional neural networks (CNNs), \( X = \mathbb{Z}^2 \) (image pixels), while for G-CNNs, \( X = G \) (group elements). A feature space is a vector space of feature maps and is often denoted by $\mathcal{F}_V$.  
\subsubsection{Convolutional neural networks}
CNNs (\cite{zhang1988}) are deep learning models which are inspired by the biological neural
networks of early visual cortex and based on translation symmetry. The 2D convolution layer $l$ operates on a stack of feature maps $f: \Z^2\to \R^{K^l}$   and $K^{l+1}$ filters $\psi^i: \Z^2\to \R^{K^l}$ , producing an output via:
\begin{equation}
	(f\star \psi^i)(x)=\sum_{y\in \Z^2}\sum_{k=1}^{K^l}f_k(y)\psi^i_k(y-x).
\end{equation}

CNNs were the first neural networks capable of achieving translation-equivariance: when the input image is shifted by $t$, the output feature maps also shift by $t$. 
\begin{multline}
	(L_tf\star \psi^i)(x)=\sum_{y}\sum_kf_k(y-t)\psi^i_k(y-x)\\\stackrel{y':=y-t}{=}\sum_{y'}\sum_kf_k(y')\psi_k^i(y'+t-x)\stackrel{\text{replacing } y' \text{ by } y}{=}\sum_y\sum_kf_k(y)\psi^i_k(y-(x-t))=(L_t(f\star \psi^i))(x).
\end{multline}

A limitation of classical CNNs is that they are only equivariant to translations: if the input shifts, the output shifts accordingly. They are not naturally equivariant to other transformations such as rotations, reflections, or algebraic symmetries.
\subsubsection{Group-equivariant convolutional neural networks}

Group-equivariant CNNs (G-CNNs), introduced by Cohen and Welling (\cite{cohen2016}), generalize the idea of CNNs by replacing the translation  by a more general action of a group $G$ . In the first layer the group convolution is defined by 

\[(f \star \psi)(g) = \sum_{y \in \Z^2}\sum_k f_k(y) \psi_k(g^{-1} y),\] for each $g\in G$.

From this formula, we see that although both the input image $f$ and the filter $\psi$ are functions on $\Z^2$, the result of the group convolution-the feature map $f\star \psi$ is a function on a discrete group $G$ (which may contain $Z^2$ as a subgroup).  Therefore for all layers after the first, the filters $\psi$
must also be functions on $G$, and the convolution operation
becomes
\begin{equation}
(f \star \psi)(g) = \sum_{h \in G}\sum_k f_k(h) \psi_k(g^{-1} h).
\end{equation}
For each $g\in G$, let $L_g$ be the left translation operator defined by
\begin{equation}\label{gcnn1}L_gf(h):=f(g^{-1}h).\end{equation}

It is easy to see that the map $L_g: \mathcal{F}_V\to \mathcal{F}_V$ is invertible and its inverse is $L_{g^{-1}}$. 

Furthermore, the map $L: G\to GL(\mathcal{F}_V),g\mapsto L_g$ is a homomorphism, since $L_{gu}f(h)=f((gu)^{-1}h)=f(u^{-1}g^{-1}h)=L_gL_hf(h).$

Therefore 
\begin{proposition}
The map	$L: G\to GL(\mathcal{F}_V), g\mapsto L_g$ is a representatioon of $G$.
\end{proposition}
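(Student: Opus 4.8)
The plan is to verify directly the two defining requirements of a representation in the sense of the earlier definition, namely that $L_g$ lands in $GL(\mathcal{F}_V)$ for each $g\in G$, and that the assignment $L\colon g\mapsto L_g$ is a group homomorphism. Since both the invertibility of $L_g$ and the homomorphism computation have essentially been recorded in the discussion preceding the statement, the proof amounts to organizing these observations cleanly and supplying the one remaining ingredient, linearity.

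First I would check that each $L_g$ is a linear endomorphism of $\mathcal{F}_V$. Because $\mathcal{F}_V$ consists of feature maps $f\colon G\to V$ with pointwise vector-space operations, and $L_g$ acts by precomposition with the bijection $h\mapsto g^{-1}h$, the well-definedness of $L_gf$ as a map $G\to V$ is immediate, and the identities $L_g(f_1+f_2)=L_gf_1+L_gf_2$ and $L_g(\lambda f)=\lambda L_gf$ follow by evaluating both sides at an arbitrary $h\in G$. Thus $L_g$ is a linear map $\mathcal{F}_V\to\mathcal{F}_V$. Next I would invoke the already-noted fact that $L_g$ is invertible with inverse $L_{g^{-1}}$, so that $L_g\in GL(\mathcal{F}_V)$; alternatively this follows from the homomorphism identity once established, since it yields $L_gL_{g^{-1}}=L_e=Id$. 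The decisive step is then the homomorphism property $L_{gu}=L_gL_u$ for all $g,u\in G$: evaluating at $h\in G$ gives $L_{gu}f(h)=f((gu)^{-1}h)=f(u^{-1}g^{-1}h)$, while $(L_gL_uf)(h)=(L_uf)(g^{-1}h)=f(u^{-1}g^{-1}h)$, so the two sides coincide.

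The only subtlety, and the point deserving the most care, is the placement of the inverse in the definition $L_gf(h)=f(g^{-1}h)$ in \eqref{gcnn1}. Had one instead set $L_gf(h)=f(gh)$, the resulting map would satisfy $L_{gu}=L_uL_g$, i.e.\ it would be an \emph{anti}-homomorphism rather than a homomorphism; the inversion is precisely what converts the associativity of multiplication in $G$ into the correct order of composition on $\mathcal{F}_V$. Apart from this bookkeeping, no genuine obstacle arises, and the three verifications above — linearity, invertibility, and the homomorphism identity — together establish that $L$ is a representation of $G$ on $\mathcal{F}_V$.
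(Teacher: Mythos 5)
Your proof is correct and follows essentially the same route as the paper: the paper establishes the proposition by noting that each $L_g$ is invertible with inverse $L_{g^{-1}}$ and by the same computation $L_{gu}f(h)=f((gu)^{-1}h)=f(u^{-1}g^{-1}h)=(L_gL_uf)(h)$. Your version is in fact slightly more careful — you verify linearity of $L_g$ explicitly (which the paper leaves implicit in writing $L_g\in GL(\mathcal{F}_V)$) and your homomorphism computation correctly writes $L_gL_u$ where the paper has the typo $L_gL_h$.
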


The representation $L$ is called the \textbf{regular representation} of $G$. 

\begin{proposition}
	The group convolution layer is equivariant under the action of $G$ by the left-translation:
	\begin{equation}
		(L_uf\star \psi)=L_u(f\star \psi), \quad \forall u\in G.
	\end{equation}
\end{proposition}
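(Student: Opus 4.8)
The plan is to verify the identity pointwise: it suffices to show that $(L_uf\star\psi)(g) = \bigl(L_u(f\star\psi)\bigr)(g)$ for every $g\in G$, since two feature maps that agree at every point of $G$ are equal. First I would fix $u,g\in G$ and expand the left-hand side directly from the two definitions in play, namely the group convolution $(f\star\psi)(g)=\sum_{h\in G}\sum_k f_k(h)\psi_k(g^{-1}h)$ and the left-translation operator $(L_uf)(h)=f(u^{-1}h)$ from \eqref{gcnn1}. Substituting the latter into the former gives
\begin{equation}
    (L_uf\star\psi)(g)=\sum_{h\in G}\sum_k f_k(u^{-1}h)\,\psi_k(g^{-1}h).
\end{equation}

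Next I would reindex the sum by the change of variable $h'=u^{-1}h$, equivalently $h=uh'$. The point justifying this step is that left-multiplication by $u^{-1}$ is a bijection of $G$ onto itself, so as $h$ ranges over all of $G$ so does $h'$, and the total sum is unchanged. Under this substitution the filter argument becomes $g^{-1}h=g^{-1}uh'$, and here I would use the identity $g^{-1}u=(u^{-1}g)^{-1}$ to rewrite it as $(u^{-1}g)^{-1}h'$. The sum then reads $\sum_{h'\in G}\sum_k f_k(h')\,\psi_k\bigl((u^{-1}g)^{-1}h'\bigr)$, which is exactly $(f\star\psi)(u^{-1}g)$ by the definition of convolution.

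Finally, by the definition of the translation operator applied to the feature map $f\star\psi$, we have $(f\star\psi)(u^{-1}g)=\bigl(L_u(f\star\psi)\bigr)(g)$, which closes the chain of equalities. Since $g\in G$ was arbitrary, the two feature maps coincide, yielding the claimed identity $L_uf\star\psi=L_u(f\star\psi)$.

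As for the main obstacle: the argument is a single change of variables, so there is no deep difficulty. The only point requiring care is the correct handling of the non-commutative group multiplication---specifically the identity $(u^{-1}g)^{-1}=g^{-1}u$ and the fact that the reindexing $h\mapsto u^{-1}h$ is a genuine bijection of $G$ and not a relabeling that could distort the summation. This mirrors exactly the classical translation-equivariance computation displayed earlier for CNNs, with the additive shift $y\mapsto y-t$ on $\Z^2$ replaced by left-multiplication on the group $G$.
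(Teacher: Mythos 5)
Your proof is correct and follows essentially the same route as the paper's: expand the convolution pointwise, reindex via the bijection $h\mapsto u^{-1}h$, and rewrite $g^{-1}u$ as $(u^{-1}g)^{-1}$ to recognize $(f\star\psi)(u^{-1}g)=\bigl(L_u(f\star\psi)\bigr)(g)$. Your added remarks on why the reindexing is valid and the care needed with non-commutativity are sound but do not change the argument.
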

\begin{proof} For all $g\in G$ we have
	\begin{multline}
		(L_uf\star \psi)(g)=\sum_{h \in G}\sum_k f_k(u^{-1}h) \psi_k(g^{-1} h)\stackrel{h':=u^{-1}h}{=}\sum_{h'\in G}\sum_kf_k(h')\psi_k(g^{-1}uh')\\\stackrel{\text{replacing } h' \text { by } h}{=}\sum_{h\in G}\sum_kf_k(h)\psi_k((u^{-1}g)^{-1}h)=(L_u(f\star \psi))(g).
	\end{multline}
\end{proof}
This structure guarantees that if the input is transformed by a group element \( g \), the output will transform in a predictable way according to a representation of \( G \).

\subsubsection{Equivariant Convolution: G-CNNs vs. S-CNNs}
\subsection*{Feature Spaces and Filters in S-CNNs}
While G-CNNs extend symmetry to larger groups, they still assume scalar feature maps. Steerable CNNs (S-CNNs), introduced in later works (e.g., \cite{cohen2017} \cite{weiler2019}), further generalize G-CNNs by allowing feature maps and kernels to transform under arbitrary representations of the group. That is, both the signal and the kernel live in representation spaces, and convolutional layers become learnable \emph{intertwiners} between these spaces.
Let \( G \) be a finite group acting on itself by left multiplication. In this paper, the feature space of an S-CNN at a given layer is modeled as the space of functions:
\[
\mathcal{F}_V := \{ f: G \to V \}.
\]
where \( V \) is a complex vector space equipped with a linear representation \( \rho_V: G \to \mathrm{GL}(V) \). The action of the group \( G \) on \( \mathcal{F}_V \)  defined by:
\begin{equation}\label{scnn1}
(\tilde{\rho}(g) f)(h) := \rho_V(g) f(g^{-1} h), \quad \forall g, h \in G.
\end{equation}

This action defines a representation of $G$ in the feature space $\mathcal{F}_V$. 

Remark that in the S-CNN architecture in the papers \cite{cohen2017}, \cite{weiler2019} the feature spaces are constructed by inducing representations from a subgroup $ H \subset G $ to the full group $ G $. This is particularly necessary when $ G $ is a continuous group (e.g., $E(2), \mathrm{SE}(2)$) or too large to classify all irreducible representations explicitly.

In the case of $UT_3(\F_3)$, however,  all the irreducible representations are known explicitly, including both 1-dimensional and 3-dimensional complex representations. So there is no need to construct induced representations from a subgroup.

Instead, we can directly assign to each layer a feature space that decomposes into known irreducible representations $ G $. This leads to a cleaner and a simpler implementation of equivariant convolution.

\begin{definition}[Kernel/Filter]
A convolutional filter is a function:
\[
\psi: G \to \mathrm{Hom}(V, W)
\]
where \( W \) is another representation space with \( \rho_W: G \to \mathrm{GL}(W) \). The filter defines how to linearly combine input features to produce output features at each group location. 

To ensure equivariance, the filter must satisfy the constraint:
\begin{equation}\label{scnn2}
	\psi(x)\rho_V(u) = \rho_W(u) \psi(x)  \quad \forall x, u \in G.
\end{equation}

\end{definition}
\begin{remark}
	The constraint \eqref{scnn2} is equivalent to:
\begin{equation}
	\label{scnn3}	\psi(x)=\rho_W(u) \psi(x)\rho_V(u^{-1}) \quad \forall x, u \in G.
\end{equation}
\end{remark}
The steerable convolution becomes
\begin{equation}
	(f\star \psi)(g):=\sum_{h\in G}\psi(g^{-1}h)f(h).
\end{equation}
\begin{proposition}
	The convolution defined above is equivariant, i.e. it satisfies
	\begin{equation}
		((\tilde{\rho}_V(u)f)\star \psi)(g)=(\tilde{\rho}_W(u)(f\star \psi))(g)
	\end{equation}
	for all $u,g\in G.$ 
\end{proposition}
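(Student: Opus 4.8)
The plan is to verify the identity by unfolding both sides from the definitions of the steerable convolution and of the action $\tilde\rho$ in \eqref{scnn1}, after which the whole statement collapses to a single reindexing of the summation together with one application of the filter constraint \eqref{scnn2}. First I would expand the left-hand side: by the definition of the convolution and of $\tilde\rho_V$,
\begin{equation*}
((\tilde\rho_V(u)f)\star\psi)(g)=\sum_{h\in G}\psi(g^{-1}h)(\tilde\rho_V(u)f)(h)=\sum_{h\in G}\psi(g^{-1}h)\rho_V(u)f(u^{-1}h).
\end{equation*}

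Next I would perform the change of summation variable $h=uh'$; since $G$ is a group, $h'$ ranges over all of $G$ as $h$ does, and the summand becomes $\psi(g^{-1}uh')\rho_V(u)f(h')$. At this point the key step is to invoke the equivariance constraint \eqref{scnn2} with $x=g^{-1}uh'$, which gives $\psi(g^{-1}uh')\rho_V(u)=\rho_W(u)\psi(g^{-1}uh')$. Since $\rho_W(u)$ no longer depends on the summation index, it can be pulled outside the sum, yielding
\begin{equation*}
((\tilde\rho_V(u)f)\star\psi)(g)=\rho_W(u)\sum_{h'\in G}\psi(g^{-1}uh')f(h').
\end{equation*}

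Finally I would recognize the remaining sum: using $g^{-1}u=(u^{-1}g)^{-1}$, it equals $\sum_{h'\in G}\psi((u^{-1}g)^{-1}h')f(h')=(f\star\psi)(u^{-1}g)$. Hence the left-hand side equals $\rho_W(u)(f\star\psi)(u^{-1}g)$, which is exactly $(\tilde\rho_W(u)(f\star\psi))(g)$ by the definition \eqref{scnn1} of $\tilde\rho_W$ applied to the feature map $f\star\psi$. I expect the only genuinely substantive step to be the use of the filter constraint \eqref{scnn2}: the pointwise intertwining hypothesis on $\psi$ is precisely what allows $\rho_V(u)$ to be converted into $\rho_W(u)$ and hoisted out of the sum, whereas the change of variable and the identity $(u^{-1}g)^{-1}=g^{-1}u$ are routine bookkeeping.
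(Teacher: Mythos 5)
Your proof is correct and takes essentially the same approach as the paper's: expand both sides from the definitions, reindex the sum via $h = uh'$, apply the intertwining constraint \eqref{scnn2} to convert $\psi(\cdot)\rho_V(u)$ into $\rho_W(u)\psi(\cdot)$, and recognize the remaining sum as $(f\star\psi)(u^{-1}g)$. The only difference is cosmetic ordering—you invoke the constraint and pull $\rho_W(u)$ out of the sum before rewriting $g^{-1}u=(u^{-1}g)^{-1}$, whereas the paper does these two steps in the opposite order.
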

\begin{proof}
By definition and condition \eqref{scnn2} we have: 
\begin{multline}
	((\tilde{\rho_V}(u)f)\star \psi)(g)=\sum_{h\in G}\psi(g^{-1}h)\rho_V(u)f(u^{-1}h)\stackrel{h'=u^{-1}h}{=}\sum_{h'\in G}\psi(g^{-1}uh')\rho_V(u)f(h')\\\stackrel{h'\to h}{=}\sum_{h\in G}\psi((u^{-1}g)^{-1}h)\rho_V(u)f(h)=\sum_{h\in G}\rho_W(u)\psi((u^{-1}g)^{-1}h)f(h)=(\tilde{\rho}_W(u)(f\star \psi))(g).
\end{multline}
\end{proof}

  In our work, we adopt the steerable CNN framework using the nilpotent group $UT_3(\F_3)$ and design equivariant layers using its irreducible representations. This allows us to exploit the structure of $UT_3(\F_3)$ and ensures precise symmetry behavior across the network.

\section{Theoretical Results}
\subsection{Decomposition of equivariant feature spaces and filters}

Let $\{(\mu_1,V_1),(\mu_2,V_2),...,(\mu_r,V_r)\}$ is the full list of pairwise non-isomorphic irreducible representations of a finite group $G$. 

Now let \( V \) and \( W \) be finite-dimensional (real or complex ) representations of \( G \), corresponding to the input and output feature spaces of a convolutional layer in a G-CNN.

Since $G$ is finite, by using Maschke's theorem \ref{maschke} and collecting isomorphic irreducible representations of $G$ together we have the decomposition
\begin{equation}\label{dec} V = \bigoplus_{i\in I} V_i^{\oplus m_i}, \quad W = \bigoplus_{j\in J}V_j^{\oplus n_j} \end{equation}
where $I,J\subseteq \{1,2,...,r\}$  and \( m_i, n_j \in \Z^+ \) denote positive multiplicities.
\begin{theorem}\label{dec1}
	The decomposition \eqref{dec} induces the following decomposition of the funtion spaces:
	\[
	\mathcal{F}_V=\{ f: G \to V \}=\bigoplus_i \mathcal{F}_{V_i}^{\oplus m_i}, \quad 
	\mathcal{F}_W=\{ f: G \to W \}=\bigoplus_j \mathcal{F}_{V_j}^{\oplus n_j}.
	\]
\end{theorem}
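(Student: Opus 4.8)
The plan is to show that the assignment $V\mapsto\mathcal{F}_V$ is additive, i.e. it carries $G$-invariant direct sum decompositions of the target space into direct sum decompositions of feature spaces. The essential observation is that the action $\tilde{\rho}$ of \eqref{scnn1} is built pointwise from $\rho_V$, so it respects any block-diagonal structure of $\rho_V$. Concretely, I would first establish the single isomorphism $\mathcal{F}_{V'\oplus V''}\cong\mathcal{F}_{V'}\oplus\mathcal{F}_{V''}$ for any two representations $V',V''$ of $G$, then iterate it together with the evident identification $\mathcal{F}_{V_i^{\oplus m_i}}\cong\mathcal{F}_{V_i}^{\oplus m_i}$ applied to the finite decomposition \eqref{dec}. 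The same argument handles both $\mathcal{F}_V$ and $\mathcal{F}_W$ verbatim.

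For the binary case, let $\pi:V'\oplus V''\to V'$ and $\pi':V'\oplus V''\to V''$ be the canonical projections, which are $G$-equivariant precisely because $\rho_{V'\oplus V''}=\rho_{V'}\oplus\rho_{V''}$ is block-diagonal. I would define $\Phi:\mathcal{F}_{V'\oplus V''}\to\mathcal{F}_{V'}\oplus\mathcal{F}_{V''}$ by $\Phi(f)=(\pi\circ f,\,\pi'\circ f)$. This is manifestly a linear isomorphism of vector spaces, with inverse $(f_1,f_2)\mapsto\bigl(h\mapsto(f_1(h),f_2(h))\bigr)$.

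The one step that requires genuine checking is $G$-equivariance of $\Phi$, namely $\Phi\circ\tilde{\rho}(g)=(\tilde{\rho}_{V'}(g)\oplus\tilde{\rho}_{V''}(g))\circ\Phi$ for all $g\in G$. Here the action $\tilde{\rho}$ combines two ingredients: the domain translation $h\mapsto g^{-1}h$, which is identical on all three feature spaces and hence harmless, and the pointwise codomain transformation $\rho_{V'\oplus V''}(g)$. Since the latter is block-diagonal it commutes with the projections, $\pi\circ\rho_{V'\oplus V''}(g)=\rho_{V'}(g)\circ\pi$, so unwinding \eqref{scnn1} gives $\pi\bigl((\tilde{\rho}(g)f)(h)\bigr)=\rho_{V'}(g)\,\pi(f(g^{-1}h))=(\tilde{\rho}_{V'}(g)(\pi\circ f))(h)$, and similarly for $\pi'$. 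This is exactly the required intertwining relation, so $\Phi$ is an isomorphism of representations.

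Finally, I would point out the conceptual shortcut that makes the statement transparent: the feature representation is naturally $G$-isomorphic to $\mathcal{F}_V\cong L\otimes V$, where $L$ is the regular representation of \eqref{gcnn1}, via $\phi\otimes v\mapsto(h\mapsto\phi(h)v)$; this map is $G$-equivariant by the same computation as above and is bijective by a dimension count. The theorem then follows at once from distributivity of $\otimes$ over $\oplus$. I do not expect any substantive obstacle here, since the result is essentially the additivity of the functor $V\mapsto\mathcal{F}_V$; the only care needed is bookkeeping the two commuting pieces of the action in the equivariance verification.
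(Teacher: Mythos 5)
Your proposal is correct, and in fact it proves more than the paper's own argument does. The paper's proof is the direct, pointwise form of your binary step: for $f\in\mathcal{F}_V$ it writes each value $f(g)$ in components $v_{ik}\in V_i$ according to \eqref{dec}, defines component functions $f_{ik}(g):=v_{ik}$, and verifies $(f+\phi)_{ik}=f_{ik}+\phi_{ik}$ and $(cf)_{ik}=cf_{ik}$; that is, it establishes the decomposition only as one of vector spaces and never touches the $G$-action. What you do differently is the equivariance check: you verify that the canonical projections intertwine $\tilde{\rho}$ of \eqref{scnn1} with the actions on $\mathcal{F}_{V'}$ and $\mathcal{F}_{V''}$, so that your $\Phi$ is an isomorphism of representations, not merely of vector spaces. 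Since Theorem \ref{dec1} exists precisely to feed the equivariant machinery (e.g.\ Theorem \ref{red} and the filter parameterization), this is the version one actually needs, and your proof supplies the step the paper leaves implicit. Your tensor-product shortcut $\mathcal{F}_V\cong L\otimes V$, with $L$ the regular representation of \eqref{gcnn1}, is a genuinely different and more conceptual route --- the theorem then reads as distributivity of $\otimes$ over $\oplus$ --- but it has one small gap as stated: ``bijective by a dimension count'' is not by itself an argument, since a linear map between spaces of equal dimension need not be bijective. Surjectivity, however, is immediate (any $f$ is the image of $\sum_{x\in G}\delta_x\otimes f(x)$, where $\delta_x$ is the indicator function of $x\in G$), and together with the dimension count this closes the gap; alternatively, injectivity can be checked on the basis $\{\delta_x\otimes e_j\}$.
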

\begin{proof}
	Suppose that $f\in \mathcal{F}_V.$ For each $g\in G$ since $f(g)\in V$ and $V=\bigoplus_{i=1}^{r} V_i^{\oplus m_i}$, so $f(g)$ can be written as a $\sum_{i\in I}n_i$-tuple of vectors $v_{ik}$ , where $i\in I, k=\overline{1,n_i}, v_{ik}\in V_i$. 

	Now if we define $f_{ik}(g):=v_{ik},$ then $f_{ik}\in \mathcal{F}_{V_i}$ and $f$ can be written as a $\sum_{i\in I}m_i$-tuple of functions $f_{ik}\in \mathcal{F}_{V_i}$. 
	
	Furthermore, for any two functions $f,\phi\in \mathcal{F}_V$ we have $(f+\phi)(g)=(f(g)+\phi(g)=(v_{ik})_{i\in I, k=\overline{1,m_i}}+(w_{ik})_{i\in I, k=\overline{1,m_i}}=(v_{ik}+w_{ik})_{i\in I, k=\overline{1,m_i}}\implies (f+\phi)_{ik}=f_{ik}+\phi_{ik}.$
	
	Similarly, we can verify that $(cf)_{ik}=cf_{ik}$ for any $f\in \mathcal{F}_V, c\in \C.$
	
	Therefore $\mathcal{F}_V=\bigoplus_i \mathcal{F}_{V_i}^{\oplus m_i}.$
\end{proof}
\begin{theorem}[Equivariant filter partition]\label{red}
Let $(\rho_V,V)$ and $(\rho_W,W)$ be two complex representations of a finite group $G$. Let $\psi: G\to Hom(V,W)$ be an equivariant filter. Then  there exist invertible matrices $Q_{in}\in M_{dim(V)}(\C)$ and $Q_{out}\in M_{dim(W)}(\C)$ such that for all $g\in G$ $$\kappa(g)=Q_{out}^{-1}\psi(g)Q_{in}$$ is a block matrix  in which  $\kappa_{ji}(g)=0$  when $i\neq j$ and 
$$\kappa_{ii}(g)=A^{(i)}(g)\otimes I_{d_i},$$ where $A^{(i)}(g)$ is a matrix and $d_i$ is a positive integer which are determined by the decompositions of $V$ and $W$ into the direct sums of irreducible subrepresentations. 
\end{theorem}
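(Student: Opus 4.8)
The plan is to recognize that the equivariance constraint \eqref{scnn2} says precisely that, for each fixed $x \in G$, the linear map $\psi(x): V \to W$ is a $G$-intertwiner, i.e. $\psi(x) \in \mathrm{Hom}_G(V, W)$. Thus the whole theorem reduces to describing the structure of $\mathrm{Hom}_G(V, W)$ in a basis adapted to the isotypic decompositions \eqref{dec}, together with the observation that the change-of-basis matrices realizing these decompositions depend only on $V$ and $W$, not on the point $x$ at which $\psi$ is evaluated; this is what allows the same $Q_{in}, Q_{out}$ to work for all $g$.

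First I would fix bases of $V$ and $W$ compatible with \eqref{dec}. Concretely, I would choose an invertible $Q_{in} \in M_{\dim V}(\C)$ so that $Q_{in}^{-1}\rho_V(u)Q_{in} = \bigoplus_{i \in I} I_{m_i} \otimes \mu_i(u)$ for all $u$, where the $i$-th summand is the isotypic block $V_i^{\oplus m_i} \cong \C^{m_i} \otimes V_i$, with $G$ acting trivially on the multiplicity factor $\C^{m_i}$ and by the irreducible matrix representation $\mu_i$ on $V_i$. Analogously I would pick $Q_{out} \in M_{\dim W}(\C)$ with $Q_{out}^{-1}\rho_W(u)Q_{out} = \bigoplus_{j \in J} I_{n_j} \otimes \mu_j(u)$. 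Both matrices are fixed once and for all by Maschke's Theorem \ref{maschke} and the grouping of isomorphic constituents; crucially they do not involve $g$.

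Next, setting $\kappa(x) = Q_{out}^{-1}\psi(x)Q_{in}$, the intertwining identity conjugates to $\kappa(x)\bigl(\bigoplus_i I_{m_i} \otimes \mu_i(u)\bigr) = \bigl(\bigoplus_j I_{n_j} \otimes \mu_j(u)\bigr)\kappa(x)$ for all $u$. Writing $\kappa(x)$ in block form $(\kappa_{ji}(x))$ according to these two decompositions, each block $\kappa_{ji}(x): \C^{m_i} \otimes V_i \to \C^{n_j} \otimes V_j$ intertwines $\mu_i$ and $\mu_j$. I would then apply Schur's Lemma \ref{schur}: part (1) forces $\kappa_{ji}(x) = 0$ whenever $i \neq j$ (non-isomorphic irreducibles), while part (2), applied to each of the $n_i \times m_i$ elementary $V_i \to V_i$ sub-blocks, forces each such sub-block to be a scalar multiple of $I_{d_i}$ with $d_i = \dim V_i$. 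Collecting these scalars into a matrix $A^{(i)}(x) \in M_{n_i \times m_i}(\C)$ yields exactly $\kappa_{ii}(x) = A^{(i)}(x) \otimes I_{d_i}$, which is the claimed form (with the diagonal blocks understood to range over $i \in I \cap J$, the remaining blocks being empty or zero).

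The computation is essentially a bookkeeping argument around Schur's Lemma, so the main obstacle is not conceptual but notational: one must order the chosen bases consistently so that the multiplicity space is the slow-varying (outer) tensor factor in both the representation $I_{m_i} \otimes \mu_i$ and the intertwiner $A^{(i)} \otimes I_{d_i}$. With the opposite convention one would instead obtain $I_{d_i} \otimes A^{(i)}$, so I would state the basis ordering explicitly at the outset to guarantee that the scalar sub-blocks assemble into a clean Kronecker product rather than a permuted version of it.
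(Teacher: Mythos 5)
Your proposal is correct and follows essentially the same route as the paper's proof: both fix $g$-independent change-of-basis matrices $Q_{in}, Q_{out}$ via Maschke's Theorem \ref{maschke}, conjugate $\psi$ into block form relative to the isotypic decompositions, and apply Schur's Lemma \ref{schur} sub-block by sub-block to annihilate the blocks between non-isomorphic irreducibles and reduce the diagonal ones to $A^{(i)}(g)\otimes I_{d_i}$. Your added remarks --- that $Q_{in},Q_{out}$ are independent of the evaluation point, that the diagonal blocks range over $i\in I\cap J$, and that the Kronecker-factor ordering must be fixed consistently --- are refinements of presentation rather than a different argument.
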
                                                                                           
\begin{proof}
	Following the Maschke's Theorem \ref{maschke} we have the decompositions \ref{dec} corresponding to the decompositions 
	$\rho_V\cong\bigoplus_{i\in I} \mu_i^{\oplus m_i}, \rho_W\cong \bigoplus_{j\in J}\mu_j^{\oplus n_j}.$ 
	
Let $I=\{i_1,...,i_t\},J=\{j_1,...,j_s\}.$ 
	Now by Definition 
	\ref{sem}, the constraint \eqref{scnn3} there exist invertible matrices $Q_{in}\in M_{dim(V)}(\C),Q_{out}\in M_{dim(W)}(\C)$ such that 
	\begin{equation}\label{dec4}
	\psi(g)=Q_{out}diag(\mu_{j_1}^{\oplus n_{j_1}}(u),...,\mu_{j_s}^{\oplus n_{j_s}}(u))Q_{out}^{-1}\psi(g)Q_{in}diag(\mu_{i_1}^{\oplus m_{i_1}}(u^{-1}),...,\mu_{i_t}^{\oplus n_{i_t}}(u^{-1}))Q_{in}^{-1}
	\end{equation}
	
	Set $\kappa(g)=Q_{out}^{-1}\psi(g)Q_{in}.$ Then from \eqref{dec4} we deduce
	\begin{equation}
 		\kappa(g)=diag(\mu_{j_1}^{\oplus n_{j_1}}(u),...,\mu_{j_s}^{\oplus n_{j_s}}(u))\kappa(g)diag(\mu_{i_1}^{\oplus m_{i_1}}(u^{-1}),...,\mu_{i_t}^{\oplus n_{i_t}}(u^{-1})).
	\end{equation}
	We rewrite this equality in the block-matrix form:
	\begin{multline}
		\begin{pmatrix}
			\kappa_{j_1i_1}(g)&\kappa_{j_1i_2}(g)&...&\kappa_{j_1i_t}(g)\\
			\kappa_{j_2i_1}(g)&\kappa_{j_2i_2}(g)&...&\kappa_{j_2i_t}(g)\\
			...&...&...&...\\
			\kappa_{j_si_1}(g)&\kappa_{j_si_2}(g)&...&\kappa_{j_si_t}(g)\\
		\end{pmatrix}=\begin{pmatrix}
		\mu_{j_1}^{\oplus n_{j_1}}(u)&0&...&0\\
		0&\mu_{j_2}^{\oplus n_{j_2}}(u)&...&0\\
		...&...&...&...\\
		0&0&...&\mu_{j_s}^{\oplus n_{j_s}}(u)
		\end{pmatrix}\times\\\times	\begin{pmatrix}
		\kappa_{j_1i_1}(g)&\kappa_{j_1i_2}(g)&...&\kappa_{j_1i_t}(g)\\
		\kappa_{j_2i_1}(g)&\kappa_{j_2i_2}(g)&...&\kappa_{j_2i_t}(g)\\
		...&...&...&...\\
		\kappa_{j_si_1}(g)&\kappa_{j_si_2}(g)&...&\kappa_{j_si_t}(g)\\
		\end{pmatrix}\times \begin{pmatrix}
		\mu_{i_1}^{\oplus m_{i_1}}(u^{-1})&0&...&0\\
		0&\mu_{i_2}^{\oplus n_{i_2}}(u^{-1})&...&0\\
		...&...&...&...\\
		0&0&...&\mu_{i_t}^{\oplus n_{i_t}}(u^{-1})
		\end{pmatrix}.
	\end{multline}
	Performing the multiplication of block matrices gives:
	\begin{equation}
		\kappa_{ji}(g)=\mu_j^{\oplus n_j}(u)\kappa_{ji}(g)\mu_i^{\oplus m_i}(u^{-1}).
	\end{equation}
	If we continue to consider $\kappa_{ji}(g)$ as a block matrix with blocks $\kappa_{ji,kl}$, where $k\in \{1,...,n_j\},l\in \{1,...,m_i\}$, then we get 
	\begin{equation}
		\kappa_{ji,kl}(g)=\mu_j(u)\kappa_{ji,kl}(g)\mu_i(u^{-1}).
	\end{equation}
	Therefore $\kappa_{ji,kl}(g)\in Hom_G(V_i,V_j).$  By Schur's Lemma \ref{schur} we have
	\begin{equation}\label{dec5}
		\kappa_{ji,kl}(g)=\begin{cases}
			0 \text{ if } j\neq i,\\
			a^{(i)}_{kl}(g)I_{d_i} \text{ if } j=i
		\end{cases}
	\end{equation}
	where $d_i=dim(V_i).$ ,
	
	Thus $\kappa_{ji}=0$ if $j\neq i$ and $\kappa_{ii}(g)=A^{(i)}(g)\otimes I_{d_i},$ where $A^{(i)}(g)=(a_{kl}^{(i)}(g))\in M_{n_i\times m_i}(\C)$ 
\end{proof}
\begin{remark}
	\begin{enumerate}
	\item In the special case, when the input and output spaces are the same ($V=W$), then $\kappa(g)$ is block-diagonal, i.e. we have the decomposition
	\begin{equation}
		\kappa(g)=\bigoplus_{i\in I} \kappa_{ii}=\bigoplus_{i\in I} A^{(i)}\otimes I_{d_i},
	\end{equation} where $A^{(i)}\in M_{m_i^2}(\C).$
	\item When $m_i=n_i=1$, then $A^{(i)}(g)=\lambda^{(i)}(g)\in C$, so $\kappa_{ii}(g)=\lambda^{(i)}(g)I_{d_i}$ is a scalar matrix.
\end{enumerate}
\end{remark}
\textbf{Significance of the decomposition theorems in G-CNN Architecture}

The decomposition theorems \ref{dec1}, \ref{red} provides a rigorous algebraic structure underlying the design of group-equivariant convolutional layers. Specifically, it asserts that any equivariant filter \( \psi: V \to W \)  can be reduced to block maps acting between isotypic components of the group representations.
This result has the following critical implications for model construction:

1. Structured Filter Design: The theorem \ref{red} restricts convolutional kernels to act only within matching irreducible subrepresentations. That is, if the input contains components transforming under an irreducible representation \( V_i \), then the output can only inherit information through matching components. This allows the design of convolution kernels to focus on specific channels aligned with each representation.

2. Efficient Parameterization: Rather than learning an arbitrary linear map between high-dimensional feature spaces (using $|G|\times dim(V)\times dim(W))=|G|\times \sum_{i\in I}m_id_i\sum_{j\in J}n_jd_j$ parameters for the filter $\psi$, the theorem \ref{red} allows the model to be parameterized in terms of smaller filters (namely, using only $|G|\times \sum_{i\in I\cap J}m_in_i$ parameters due to the formula \eqref{dec5}).  This reduces redundancy and leads to more efficient learning.

3. Representation-aware Feature Propagation: In the case of \( G = \mathrm{UT}_3(\mathbb{F}_3) \), which contains both 1-dimensional and 3-dimensional irreducible representations, the theorem ensures that the flow of information across layers respects the symmetry-induced decomposition. Each type of feature (e.g., local, global, structured) can be associated with different representations.

4. Robustness to Symmetric Deformations: By aligning filters to act within irreducible components, the model becomes inherently robust to group-preserving deformations, for example, handwritten input undergoing triangular shears or affine nilpotent distortions will still yield consistent activations in the corresponding channels.

In summary, the theorem provides both a mathematical guarantee and a practical guideline for designing interpretable, efficient, and symmetry-aware architectures when the underlying data domain admits a non-trivial group action such as \( \mathrm{UT}_3(\mathbb{F}_3) \).

\subsection{Preservation of Nested Subspaces under  $G$-Equivariant Convolution}
\begin{theorem}
	
	Let $G$ be a group and let  $(\rho_1,V),(\rho_2,W)$ be two representation of $G$. Suppose that there is a nested sequence of $G$-invariant subspaces of $V$:
	\[
	0 = V_0 \subset V_1 \subset V_2 \subset \cdots \subset V_r = V
	\]
	
	Let \( \mathcal{F}_{V_i} := \{ f: G \to V_i \} \) denote the corresponding subspace of feature maps.
	
	Let \( \Psi: \mathcal{F}_V \to \mathcal{F}_W \) be a \( G \)-linear map.  
	Then there exists a nested sequence of $G$-invariant subspaces of $W$: \[0=W_0\subset W_1\subset W_2\subset...\subset W_r=W\] such that
	\[
	\Psi(\mathcal{F}_{V_i}) \subseteq \mathcal{F}_{W_i}
	\]
\end{theorem}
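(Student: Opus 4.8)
The plan is to construct the target chain explicitly from the data of $\Psi$ and then verify its three required properties directly, the only nontrivial point being $G$-invariance. Recall first that the feature spaces carry the representations $\tilde\rho_1,\tilde\rho_2$ defined as in \eqref{scnn1}, namely $(\tilde\rho_1(g)f)(h)=\rho_1(g)f(g^{-1}h)$ on $\mathcal{F}_V$ and $(\tilde\rho_2(g)s)(h)=\rho_2(g)s(g^{-1}h)$ on $\mathcal{F}_W$, and that $\Psi$ being $G$-linear means $\Psi(\tilde\rho_1(g)f)=\tilde\rho_2(g)\Psi(f)$ for all $g\in G$. A preliminary observation I would record is that each $\mathcal{F}_{V_i}$ is a subrepresentation of $\mathcal{F}_V$: if $f$ takes values in the $G$-invariant space $V_i$, then $(\tilde\rho_1(g)f)(h)=\rho_1(g)f(g^{-1}h)\in\rho_1(g)V_i\subseteq V_i$, so $\tilde\rho_1(g)f\in\mathcal{F}_{V_i}$.

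For $1\le i\le r-1$ I would then define
\[
W_i:=\mathrm{span}\bigl\{(\Psi f)(h): f\in\mathcal{F}_{V_i},\ h\in G\bigr\},
\]
equivalently $W_i=\sum_{h\in G}\mathrm{ev}_h\bigl(\Psi(\mathcal{F}_{V_i})\bigr)$ where $\mathrm{ev}_h:s\mapsto s(h)$ is the linear evaluation map, a description that makes $W_i$ manifestly a subspace as a sum of images of the subspace $\Psi(\mathcal{F}_{V_i})$. I then set $W_0:=0$ and $W_r:=W$. The containment $\Psi(\mathcal{F}_{V_i})\subseteq\mathcal{F}_{W_i}$ is immediate for $i<r$: every $f\in\mathcal{F}_{V_i}$ satisfies $(\Psi f)(h)\in W_i$ for all $h$ by construction, hence $\Psi f\in\mathcal{F}_{W_i}$; for $i=r$ it holds trivially since $\mathcal{F}_{W_r}=\mathcal{F}_W$. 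Nestedness $W_i\subseteq W_{i+1}$ follows because $V_i\subseteq V_{i+1}$ gives $\mathcal{F}_{V_i}\subseteq\mathcal{F}_{V_{i+1}}$, enlarging the generating set, and the endpoints $W_0=0$ (as $\mathcal{F}_{V_0}=\{0\}$ so all generators vanish) and $W_r=W$ hold by definition. I note that since $\Psi$ need not be surjective, the top step must be set to $W$ explicitly rather than read off as a span.

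The heart of the argument, and the step I expect to require the most care, is showing each $W_i$ is $G$-invariant. I would take a generator $w=(\Psi f)(h)$ with $f\in\mathcal{F}_{V_i}$ and compute, for arbitrary $g\in G$,
\[
\rho_2(g)w=\rho_2(g)(\Psi f)(h)=\bigl(\tilde\rho_2(g)\,\Psi f\bigr)(gh)=\bigl(\Psi(\tilde\rho_1(g)f)\bigr)(gh),
\]
where the middle equality uses the definition of $\tilde\rho_2$ evaluated at the shifted point $gh$ and the last uses the $G$-linearity of $\Psi$. By the preliminary observation $\tilde\rho_1(g)f\in\mathcal{F}_{V_i}$, so the right-hand side is again of the form $(\Psi f')(h')$ with $f'\in\mathcal{F}_{V_i}$ and $h'=gh\in G$; hence $\rho_2(g)w\in W_i$. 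Since $\rho_2(g)$ is linear and the generators span $W_i$, this yields $\rho_2(g)W_i\subseteq W_i$ for all $g$, so $W_i$ is $G$-invariant (the endpoints $W_0=0$ and $W_r=W$ being invariant trivially). The genuinely delicate bookkeeping is precisely the index shift $h\mapsto gh$ that couples the two feature-space actions through $\tilde\rho_2$, which is exactly what makes the naive "set of values of the image" closed under $\rho_2$ and hence a legitimate $G$-invariant target space.
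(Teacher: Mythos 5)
Your proof is correct and follows essentially the same route as the paper's: define $W_i$ from the values of $\Psi(\mathcal{F}_{V_i})$, obtain nestedness from $\mathcal{F}_{V_i}\subseteq\mathcal{F}_{V_{i+1}}$, and obtain $G$-invariance from the equivariance of $\Psi$ together with the invariance of each $\mathcal{F}_{V_i}$. Your version is in fact slightly more careful at three points where the paper is loose: you take the \emph{span} of the values (the paper's bare set of values need not be closed under addition), you set $W_r=W$ explicitly (the paper's definition of $W_r$ need not exhaust $W$ when $\Psi$ is not surjective), and you make the evaluation-point shift $h\mapsto gh$ explicit in the invariance computation, which the paper's notation $\tilde{\rho}_2(g)\Psi f(h)=\Psi f(g^{-1}h)$ glosses over.
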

\begin{proof}
Recall that the action of $G$ on $\mathcal{F}_V$ is defined by:
\begin{equation}
	\tilde{\rho}_1(g)f(h)=\rho_1(g)f(g^{-1}h)
\end{equation} for all $g,h\in G.$

If $f\in \mathcal{F}_{V_i}$, then $	\tilde{\rho}_1(g)f(h)=\rho_1(g)f(g^{-1}h)\in V_i$ since $V_i$ is $G$-invariant.

Now let $W_i:=\{(\Psi f(h))|f\in \mathcal{F}_{V_i},h\in G\},$ where $i=0,1,...,r$

Since $V_i\subset V_{i+1}$, we have $\mathcal{F}_{V_i}\subset \mathcal{F}_{V_{i+1}}$, so $W_i\subset W_{i+1}$, where $i=0,1,...,r-1.$

Now we show that $W_i$ is $G$-invariant. In fact, since $\Psi$ is $G$-equivariant, we have $\tilde{\rho}_2(g)\Psi f(h)=\Psi (\tilde{\rho}_1(g)f(h))=\Psi f(g^{-1}h)\in W_i.$ 

By the definition of $W_i$, for all $f\in \mathcal{F}_{V_i}, h\in G$ we have: $(\Psi f)(h)\in W_i$. So $\Psi f\in \mathcal{F}_{W_i}$ for all $f\in \mathcal{F}_{V_i}.$ Therefore $\Psi (\mathcal{F}_{V_i})\subseteq \mathcal{F}_{W_i}.$ 
\end{proof}
 This result shows that equivariant convolutional layers preserve chains of invariant subspaces. In hierarchical models, such structure often corresponds to semantic or geometric abstraction levels. Preserving this chain ensures consistency in feature refinement across layers.
\subsection{Representations of $UT_3(\F_3)$}
\subsubsection{Structure of $UT_3(\F_3)$}
Let $G = UT_3(\mathbb{F}_3)$ be the group of $3\times3$ upper triangular matrices with 1s on the diagonal and entries from $\mathbb{F}_3=\{0,1,2\}$ (the field with 3 elements). Explicitly, each element of $UT_3(\F_3)$ has the form:
\[
g(a,b,c) = \begin{pmatrix}
	1 & a & b \\
	0 & 1 & c \\
	0 & 0 & 1
\end{pmatrix}, \quad a,b,c \in \mathbb{F}_3.
\]
Hence, $|G| = 3^3 = 27$. Furthermore, we have:

\begin{proposition}
	$UT_3(\F_3)$ is a nilpotent group of class 2.
\end{proposition}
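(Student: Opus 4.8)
The plan is to sidestep the lower/upper central series machinery in favour of the standard criterion that a group is nilpotent of class exactly $2$ if and only if it is non-abelian and its commutator subgroup lies inside its center. Working with the explicit parametrization $g(a,b,c)$, the entire argument reduces to two elementary computations over $\F_3$.

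First I would record the group law, which a single matrix multiplication yields:
\[ g(a,b,c)\,g(a',b',c')=g\bigl(a+a',\,b+b'+ac',\,c+c'\bigr), \]
together with the inverse $g(a,b,c)^{-1}=g(-a,\,ac-b,\,-c)$, obtained by solving $g(a,b,c)\,x=g(0,0,0)$. These two formulas are the only computational input; everything else is bookkeeping.

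Next I would locate the center by comparing $g(a,b,c)g(a',b',c')$ with $g(a',b',c')g(a,b,c)$: the two products agree for all $(a',b',c')$ precisely when $ac'=a'c$ holds for all $a',c'$, forcing $a=c=0$. Hence $Z(G)=\{g(0,b,0):b\in\F_3\}$ has order $3$, and in particular $G$ is non-abelian (for instance $g(1,0,0)$ and $g(0,0,1)$ do not commute). I would then compute a general commutator; after the cancellations the first and third entries vanish and one is left with
\[ [g(a,b,c),\,g(a',b',c')]=g\bigl(0,\,ac'-a'c,\,0\bigr)\in Z(G). \]
Choosing $a=c'=1,\ a'=c=0$ produces the nontrivial central element $g(0,1,0)$, so in fact $[G,G]=Z(G)\neq\{e\}$.

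To finish, since every commutator is central we obtain $[[G,G],G]\subseteq[Z(G),G]=\{e\}$, so $G$ is nilpotent of class at most $2$; and as $[G,G]\neq\{e\}$ the group is not abelian, whence the class is exactly $2$. There is no genuinely hard step here: the crux is the single structural observation that each commutator has vanishing $a$- and $c$-entries and therefore lands in the center. The only place demanding care is the arithmetic of the middle entry of the commutator, where the $ac'$ and $a'c$ cross-terms contributed by the three matrix factors must be tracked over $\F_3$ so that the net coefficient is exactly $ac'-a'c$ and the containment $[G,G]\subseteq Z(G)$ becomes manifest.
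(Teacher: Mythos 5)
Your proof is correct and takes essentially the same route as the paper's: both compute a general commutator explicitly, find it equals $g(0,\,ac'-a'c,\,0)$, and conclude that $[G,G]$ lies in (indeed equals) the center, so that $[G,[G,G]]$ is trivial and the nilpotency class is exactly $2$. Your write-up is slightly more self-contained, since you verify $Z(G)=\{g(0,b,0):b\in\F_3\}$ and non-abelianness explicitly rather than asserting the commutators are central, but the core computation is identical.
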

\begin{proof}

	Observe that given a matrix $A = \begin{pmatrix}
		1&a&b\\0&1&c\\0&0&1
	\end{pmatrix}$, we have $A^{-1}= \begin{pmatrix}
		1&-a&-b+ac\\0&1&-c\\0&0&1
	\end{pmatrix}$. \\
	Let $G_0 = G=UT_3(\F_3)$
	We have:
	\begin{equation}
		G_1 = [G_0, G_0] =\langle ABA^{-1}B^{-1}|A,B\in G\rangle 
	\end{equation}
	A direct computation gives
	\begin{multline}
		ABA^{-1}B^{-1}= \begin{pmatrix} 1&a_{1}&b_{1}\\0&1&c_{1}\\0&0&1 \end{pmatrix} 
		\begin{pmatrix} 1&a_{2}&b_{2}\\0&1&c_{2}\\0&0&1 \end{pmatrix}  
		\begin{pmatrix} 1&-a_{1}&-b_{1}+a_{1}c_{1}\\0&1&-c_{1}\\0&0&1 \end{pmatrix}
		\begin{pmatrix} 1&-a_{2}&-b_{2}+a_{2}c_{2}\\0&1&-c_{2}\\0&0&1 \end{pmatrix} \\
		= \begin{pmatrix} 1&0 & a_{1}c_{2} - a_{2}c_{1}\\0&1&0\\0&0&1 \end{pmatrix}\in Z(G)\implies G_1=Z(G) \end{multline}
	Thus $ G_2 = [G, G_1]=[G,Z(G)]=\{I_3\} $
	
	By definition, $UT_3(\F_3)$ is a nilpotent group of class 2.   
\end{proof}
The nilpotency of $UT_3(\F_3)$ has several important implications in designing equivariant models such as G-CNNs:

\begin{itemize}
	\item \textbf{Hierarchical Decomposition:} Nilpotent groups admit a central series $G = G_0 \triangleright G_1 \triangleright G_2 = \{e\}$, enabling structured analysis and learning from global to local symmetries.
	\item \textbf{Abundance of 1D Representations:} The large abelian quotient $G/[G,G] \cong \mathbb{F}_3^2$ results in numerous one-dimensional representations, useful for building interpretable and computationally efficient modules.
	\item \textbf{Simplified Convolution:} The group convolution over a nilpotent group reduces to algebraically tractable forms, as the center and commutator subgroups simplify the integration (or summation) structure.
	\item \textbf{Natural Equivariance for Structured Data:} Data types such as symbolic sequences, handwritten formulas, or linguistic constructs often exhibit symmetries well-modeled by nilpotent actions. Thus, using $\mathrm{UT}_3(\mathbb{F}_3)$ aligns model symmetry with data symmetry.
\end{itemize}

These aspects make nilpotent groups particularly attractive in theory-driven neural architectures, especially where symbolic structure and hierarchical reasoning are important.

\subsubsection{Conjugacy classes of $UT_3(\F_3)$}
A direct computation shows that
$UT_3(\F_3)$ has 11 distinct conjugacy classes, as follow:
\begin{itemize}
	\item Three classes of size 1: $C_1=\left \{\begin{pmatrix} 1&0&0\\0&1&0\\0&0&1\end{pmatrix} \right \},
	C_2=\left \{\begin{pmatrix} 1&0&1\\0&1&0\\0&0&1\end{pmatrix} \right \}, 
C_3=	\left \{\begin{pmatrix} 1&0&2\\0&1&0\\0&0&1\end{pmatrix} \right \}$.
	\item Eight classes of size 3:
	\begin{itemize}
		
		\item $C_4=\{\begin{pmatrix} 1&0&0\\0&1&1\\0&0&1\end{pmatrix} ,
		\begin{pmatrix} 1&0&1\\0&1&1\\0&0&1\end{pmatrix} , 
		\begin{pmatrix} 1&0&2\\0&1&1\\0&0&1\end{pmatrix}\}$
		
		\item $C_5=\{\begin{pmatrix} 1&0&0\\0&1&2\\0&0&1\end{pmatrix} ,
		\begin{pmatrix} 1&0&1\\0&1&2\\0&0&1\end{pmatrix} , 
		\begin{pmatrix} 1&0&2\\0&1&2\\0&0&1\end{pmatrix}\}$
		\item $C_6=\{\begin{pmatrix} 1&1&0\\0&1&0\\0&0&1\end{pmatrix} ,
		\begin{pmatrix} 1&1&1\\0&1&0\\0&0&1\end{pmatrix} , 
		\begin{pmatrix} 1&1&2\\0&1&0\\0&0&1\end{pmatrix}\}$
		
		\item $C_7=\{\begin{pmatrix} 1&1&0\\0&1&1\\0&0&1\end{pmatrix} ,
		\begin{pmatrix} 1&1&1\\0&1&1\\0&0&1\end{pmatrix} , 
		\begin{pmatrix} 1&1&2\\0&1&1\\0&0&1\end{pmatrix}\}$
		\item $C_8=\{\begin{pmatrix} 1&1&0\\0&1&2\\0&0&1\end{pmatrix} ,
		\begin{pmatrix} 1&1&1\\0&1&2\\0&0&1\end{pmatrix} , 
		\begin{pmatrix} 1&1&2\\0&1&2\\0&0&1\end{pmatrix}\}$
		\item $C_9=\{\begin{pmatrix} 1&2&0\\0&1&0\\0&0&1\end{pmatrix} ,
		\begin{pmatrix} 1&2&1\\0&1&0\\0&0&1\end{pmatrix} , 
		\begin{pmatrix} 1&2&2\\0&1&0\\0&0&1\end{pmatrix} \}$
		\item $C_{10}=\{\begin{pmatrix} 1&2&0\\0&1&1\\0&0&1\end{pmatrix} ,
		\begin{pmatrix} 1&2&1\\0&1&1\\0&0&1\end{pmatrix} , 
		\begin{pmatrix} 1&2&2\\0&1&1\\0&0&1\end{pmatrix}\} $
		\item $C_{11}=\{\begin{pmatrix} 1&2&0\\0&1&2\\0&0&1\end{pmatrix} ,
		\begin{pmatrix} 1&2&1\\0&1&2\\0&0&1\end{pmatrix} , 
		\begin{pmatrix} 1&2&2\\0&1&2\\0&0&1\end{pmatrix} \}$
	\end{itemize}
\end{itemize}
Therefore $UT_3(\F_3)$ has 11 nonisomorphic irreducible representations due to Theorem \ref{nirr}.
\subsubsection{Irreducible representations of $UT_3(\F_3)$ }\label{irrep}
\begin{enumerate}
	\item[a)]\textbf{One dimensional irreducible representations of $UT_3(\F_3)$}
	
	We begin with the following definition:
	\begin{definition}
		Let $G$ be a group. The \textbf{abelianization} of $G$, denoted by $G^{ab}$ is defined by
		\begin{equation}
			G^{ab}:=G/[G,G].
		\end{equation}
	\end{definition}
		To construct the one-dimensional irreducible representations of the group \( G=UT_3(\mathbb{F}_3) \), we begin with the classical result in representation theory:
	
	\begin{theorem}
		Let \( G \) be a finite group. Then there is a canonical group isomorphism:
		\[
		\mathrm{Hom}(G, \mathbb{C}^*) \cong \mathrm{Hom}(G^{ab}, \mathbb{C}^*),
		\]
	
	\end{theorem}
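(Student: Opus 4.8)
The plan is to realize the claimed isomorphism concretely as \emph{precomposition with the canonical quotient projection} $\pi\colon G\to G^{ab}=G/[G,G]$, and then to verify that this precomposition map is a bijective group homomorphism. The whole argument rests on the universal property of abelianization: because the target $\mathbb{C}^*$ is abelian, every homomorphism $G\to\mathbb{C}^*$ must kill the commutator subgroup $[G,G]$, and therefore descends canonically to $G^{ab}$.

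First I would fix the group structures. Both $\mathrm{Hom}(G,\mathbb{C}^*)$ and $\mathrm{Hom}(G^{ab},\mathbb{C}^*)$ are abelian groups under pointwise multiplication of characters; the fact that a pointwise product of two homomorphisms into $\mathbb{C}^*$ is again a homomorphism uses only the commutativity of $\mathbb{C}^*$. I would then define
\[
\Phi\colon \mathrm{Hom}(G^{ab},\mathbb{C}^*)\longrightarrow \mathrm{Hom}(G,\mathbb{C}^*),\qquad \Phi(\psi)=\psi\circ\pi,
\]
which is well defined since a composition of homomorphisms is a homomorphism. That $\Phi$ respects the group operation follows by a one-line pointwise check: $\Phi(\psi_1\psi_2)(g)=\psi_1(\pi(g))\psi_2(\pi(g))=\Phi(\psi_1)(g)\,\Phi(\psi_2)(g)$ for every $g\in G$.

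Next I would establish bijectivity. Injectivity is immediate from the surjectivity of $\pi$: if $\psi\circ\pi=\psi'\circ\pi$, the two maps agree on all of $\pi(G)=G^{ab}$, hence $\psi=\psi'$. For surjectivity I would take an arbitrary $\phi\in\mathrm{Hom}(G,\mathbb{C}^*)$ and carry out the key computation
\[
\phi(aba^{-1}b^{-1})=\phi(a)\phi(b)\phi(a)^{-1}\phi(b)^{-1}=1\qquad\text{for all }a,b\in G,
\]
which holds precisely because $\mathbb{C}^*$ is commutative; hence $[G,G]\subseteq\ker\phi$. By the universal property of the quotient, $\phi$ factors uniquely as $\phi=\bar\phi\circ\pi$ for some $\bar\phi\in\mathrm{Hom}(G^{ab},\mathbb{C}^*)$, so $\Phi(\bar\phi)=\phi$ and $\Phi$ is onto.

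I do not anticipate a genuine obstacle: the only substantive point is the containment $[G,G]\subseteq\ker\phi$, which is forced by commutativity of the target, and the naturality of $\pi$ is what makes the isomorphism canonical (no choices are involved). I would close by noting the payoff for the present setting, namely that $\Phi$ reduces the enumeration of one-dimensional representations of $G=UT_3(\mathbb{F}_3)$ to the characters of the abelian quotient $G^{ab}\cong\mathbb{F}_3^2$, of which there are exactly $9$.
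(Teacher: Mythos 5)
Your proof is correct and is essentially the paper's argument run in the opposite direction: the paper defines the descent map $T(\varphi)([g])=\varphi(g)$ on cosets and spends its effort on well-definedness (the computation $\varphi(aba^{-1}b^{-1})=1$, forced by commutativity of $\mathbb{C}^*$), whereas you define the inverse map, precomposition $\psi\mapsto\psi\circ\pi$ with the projection $\pi\colon G\to G^{ab}$, and use that same commutator computation to obtain surjectivity via the universal property of the quotient. If anything, your write-up is the more complete of the two, since the paper only asserts injectivity ``by similar arguments'' and never explicitly verifies surjectivity of $T$, both of which you handle cleanly.
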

\begin{proof}
	Consider the map $T: \mathrm{Hom}(G, \mathbb{C}^*) \to \mathrm{Hom}(G^{\mathrm{ab}}, \mathbb{C}^*)$ defined by $T(\varphi)([g])=\varphi(g)$, where $\varphi\in \mathrm{Hom}(G, \mathbb{C}^*), [g]$ is the left coset of $[G,G]$ in $G$. 
	
	Remark that  \begin{equation}\label{hom1}
		T(\varphi)([g][h])=T(\varphi)([gh])=\varphi(gh)=\varphi(g)\varphi(h)=T(\varphi)([g])T(\varphi)([h]).\end{equation}
	
	At first, we verify that $T$ is well-defined. In fact, if  $[g]=[g']$, then  $g^{-1}g'\in [G,G]$, i.e. $g^{-1}g'=\prod_i[a_i,b_i]$, where $a_i,b_i\in G$. 
	
	Then \begin{multline}
		T(\varphi)([g]^{-1}[g'])=\varphi(g^{-1}g')=\prod_{i}\varphi([a_i,b_i])=\prod_i\varphi(a_i)\varphi(b_i)\varphi_(a_i^{-1})\varphi(b_i^{-1})=\\=\varphi(a_i)\varphi(a_i^{-1})\varphi(b_i)\varphi(b_i^{-1})=1\end{multline} since $\C^*$ is abelian. 
	
Therefore by \eqref{hom1} we have $T(\varphi)([g])T(\varphi)([g']^{-1})=1\implies T(\varphi)([g])=T(\varphi)([g']).$ This means $T$ is well-defined. 

By similar arguments, $T$ injective. 

Furthermore, $T(\varphi_1\varphi_2)([g])=(\varphi_1\varphi_2)(g)=\varphi_1(g)\varphi_2(g)=T(\varphi_1)[g]T(\varphi_2)(g)$ for all $g\in G, \varphi_1,\varphi_2\in Hom(G,\C^*)$. 

Thus $T(\varphi_1\varphi_2)=T(\varphi_1)T(\varphi_2)$ and so $T$ is a group homomorphism. 

In summary, $T$ is a group isomorphism. 
\end{proof}
	In our case, the abelianization of \( G \) is:
	\begin{proposition}
		\[
		G^{\mathrm{ab}} \cong \F_3\times \F_3,
		\] as groups.
	\end{proposition}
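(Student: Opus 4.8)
The plan is to exhibit an explicit surjective homomorphism from $G = UT_3(\F_3)$ onto $\F_3\times\F_3$ whose kernel is precisely the commutator subgroup $[G,G]$, and then invoke the first isomorphism theorem. The key advantage is that the preceding proposition (nilpotency of class 2) already pinned down $[G,G] = Z(G) = \{g(0,b,0) : b\in\F_3\}$, a subgroup of order $3$, so the kernel we are aiming for is completely known in advance.

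First I would define the map
\[
\pi: G \to \F_3\times\F_3, \qquad g(a,b,c)\mapsto (a,c),
\]
which records the two superdiagonal entries and forgets the corner entry $b$. To verify that $\pi$ is a group homomorphism (from $(G,\cdot)$ to $(\F_3\times\F_3,+)$), I would compute the product directly, obtaining
\[
g(a_1,b_1,c_1)\,g(a_2,b_2,c_2) = g\bigl(a_1+a_2,\; b_1+b_2+a_1c_2,\; c_1+c_2\bigr),
\]
so that $\pi\bigl(g(a_1,b_1,c_1)\,g(a_2,b_2,c_2)\bigr) = (a_1+a_2,\,c_1+c_2) = \pi\bigl(g(a_1,b_1,c_1)\bigr) + \pi\bigl(g(a_2,b_2,c_2)\bigr)$. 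Surjectivity is immediate, since $\pi\bigl(g(a,0,c)\bigr) = (a,c)$ realizes every element of $\F_3\times\F_3$.

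Next I would identify the kernel: $\pi\bigl(g(a,b,c)\bigr)=(0,0)$ forces $a=c=0$, whence $\ker\pi = \{g(0,b,0):b\in\F_3\}$, which is exactly the subgroup computed as $[G,G]$ in the nilpotency argument. The first isomorphism theorem then gives
\[
G^{\mathrm{ab}} = G/[G,G] = G/\ker\pi \;\cong\; \mathrm{im}(\pi) = \F_3\times\F_3,
\]
as groups, completing the proof.

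I do not anticipate a serious obstacle here; the only point demanding care is confirming that $\ker\pi$ \emph{coincides} with $[G,G]$ rather than merely containing it, and this is guaranteed precisely because the earlier proposition determined $[G,G]$ exactly. As an independent sanity check one may note $|G^{\mathrm{ab}}| = 27/3 = 9 = |\F_3\times\F_3|$, and that a direct computation yields $g(a,b,c)^3 = I$ over $\F_3$ (all cross terms carry a factor of $3=0$). Thus $G$ has exponent $3$, so $G^{\mathrm{ab}}$ has exponent dividing $3$ and therefore cannot be the cyclic group $\Z/9$; among the two abelian groups of order $9$ this forces $G^{\mathrm{ab}}\cong\F_3\times\F_3$, corroborating the conclusion reached via $\pi$.
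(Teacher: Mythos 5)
Your proof is correct and is essentially the paper's own argument: the paper likewise uses the projection $g(a,b,c)\mapsto(a,c)$ together with the prior computation $[G,G]=Z(G)=\{g(0,b,0)\}$, merely packaging it as a well-defined injective homomorphism on the cosets of $G^{\mathrm{ab}}$ rather than as a surjection from $G$ followed by the first isomorphism theorem. Your formulation via the first isomorphism theorem (plus the exponent-$3$ sanity check) is a clean equivalent presentation of the same idea.
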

	\begin{proof}
		$G^{ab}=G/[G,G]$ and two elements $g(a,b,c)$ and $g'(a',b',c')$ are in the same left coset of $[G,G]$ iff $g^{-1}g'\in [G,G].$
		
		On the other hand
		\begin{multline} \label{hom2}
			g^{-1}g'=\begin{pmatrix}
				1&a'-a&b'-a'c-b+ac
				\\0&1&c'-c\\0&0&1
			\end{pmatrix}\in [G,G]=Z(G) \Leftrightarrow  a=a',c=c'. 
		\end{multline} . 
		
		Now we construct a map $\phi: G^{ab}\to \F_3\times \F_3$ by $\phi([g(a,b,c)])
(a,c)$ and show that it is an isomorphism.

At first, following \eqref{hom2} $\phi$ is well-defined and injective. 

Moreover, \begin{multline}\phi ([g(a,b,c)][g'(a',b',c')])=\phi([\begin{pmatrix}
	1&a&b\\0&1&c\\0&0&1
\end{pmatrix}\begin{pmatrix}
1&a'&b'\\
0&1&c'\\
0&0&1
\end{pmatrix}])=\phi([\begin{pmatrix}
1&a+a'&b'+ac'+b\\
0&1&c+c'\\
0&0&1
\end{pmatrix}])=\\=(a+a',c+c')=\phi([g(a,b,c)])+\phi([g'(a',b',c')]).\end{multline} 

So $\phi$ is a homomorphism.	

Thus $\phi$ is an isomorphism.
\end{proof}
	
Therefore we can identify the element $[g(a,b,c)]$ with $(a,c)\in \F_3\times \F_3$	and the goal is to find all one dimensional irreducible representations of $\F_3\times \F_3$.  
	
	Every complex representation of $\F_3\times \F_3$ is a group homomorphism $\rho: \F_3\times \F_3\to \C^*$ and $\F_3\times \F_3=\langle (1,0),(0,1)\rangle$, so $\rho$ is uniquely determined by $\rho((1,0))$ and $\rho((0,1))$.  Namely, for all $(a,b)\in \Z_3\times \Z_3$ we have:
	\begin{equation}\label{rep1}
		\rho((a,c))=\rho(a(1,0)+c(0,1))=\rho((1,0))^a\rho((0,1))^c.
	\end{equation}

	On the other hand, $\rho(3(1,0))=\rho((0,0))=1=\rho((1,0))^3$, so $\rho((1,0))$ must be a third root of unity and therefore has the form $\rho((1,0))=\omega^r,$ where $\omega=e^{2\pi i/3}$ (the primitive third root of unity) and $r\in \F_3$.  Similarly, $\rho((0,1))=\omega^s, $ where $s\in \F_3.$.
	
	Substituiting $\rho((1,0))$ and $\rho((0,1))$ gives:
	\begin{equation}
		\rho((a,c))=\omega^{ra}\omega^{sc}=\omega^{ra+sc}.
	\end{equation}
	
	Thus $UT_3(\F_3)$ has $9$ nonisomorphic one dimensional representations $\rho_{r,s}$ corresponding to $9$ pairs $(r,s)\in \Z_3^2$ and determined by 
	\begin{equation}
		\rho_{r,s}(g(a,b,c))=\omega^{ra}\omega^{sc}=\omega^{ra+sc}.
	\end{equation} for all $a,c\in \F_3.$
	
		The character values of one-dimensional irreducible representations are clear: 
	\begin{equation}\label{char}
		\chi_{\rho_{r,s}}((a,c))=\omega^{ra+sc}.
	\end{equation}
	for all $(a,c)\in \F_3\times \F_3.$
	\item[b)] Three dimensional irreducible representations of $UT_3(\F_3)$ 
	
	Besides the 9 one-dimensional irreducible representations of \( G = \mathrm{UT}_3(\mathbb{F}_3) \), there exist exactly two irreducible representations of degree 3. These arise via induced representations from the center \( Z \) of \( G \).

	The center is:
	\[
	Z = \left\{ \begin{pmatrix} 1 & 0 & b \\ 0 & 1 & 0 \\ 0 & 0 & 1 \end{pmatrix} \middle| b \in \mathbb{F}_3 \right\} \cong \mathbb{F}_3.
	\]
	Choose a nontrivial character of the center \( \psi_k: Z \to \mathbb{C}^* \) defined by:
	\[
	\tau_k\left(\begin{pmatrix} 1 & 0 & b \\ 0 & 1 & 0 \\ 0 & 0 & 1 \end{pmatrix}\right) = \omega^{kb}, \quad k = 1,2.
	\]
	Then extend it to the representation $\tau_k$ of the subgroup \( H = \left\{ \begin{pmatrix} 1 & 0 & b \\ 0 & 1 & c \\ 0 & 0 & 1 \end{pmatrix} \right\} \cong \mathbb{Z}_3^2 \), which is abelian.

	Let \( \rho_k = \mathrm{Ind}_H^G(\tau_k) \). By Remark \ref{ind6} these representations have dimension $[G:H]=\frac{27}{9}=3$ and later by using the character table we shall prove that these induced representations are irreducible and nonisomorphic. 
	
	Now we shall give the explicit matrix form of these 3-dimensional representations. 
	
	Let \begin{equation}\label{ind1}V=\{\phi: G\to \C^*|f(hx)=\tau_k(h)\phi(x)\quad \forall h\in H, x\in G\}.\end{equation}
	
	Following Remark \ref{ind6}, each element $\phi\in V$ is uniquely determined by $\{\phi(r)\}$, where $r$ are representatives of the right cosets of $H$. 
	\begin{proposition}
		Two elements $g(a,b,c)$ and $g'(a',b',c')$ of $G$ are in the same right coset of $H$ if and only if $a=a'.$
	\end{proposition}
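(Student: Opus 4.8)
The plan is to reduce the coset condition to a single matrix entry. By definition of right cosets (as in Remark~\ref{ind6}), the elements $g(a,b,c)$ and $g'(a',b',c')$ lie in the same right coset of $H$ exactly when $Hg = Hg'$, which is equivalent to $g\,(g')^{-1}\in H$. Thus the entire statement reduces to forming this one product and testing membership in $H$.

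First I would compute $g\,(g')^{-1}$ explicitly, reusing the inversion formula already recorded in the proof that $UT_3(\F_3)$ is nilpotent:
\[
(g')^{-1}=\begin{pmatrix}1&-a'&-b'+a'c'\\0&1&-c'\\0&0&1\end{pmatrix}.
\]
Carrying out the multiplication yields a unitriangular matrix whose $(1,2)$-entry is $a-a'$, whose $(2,3)$-entry is $c-c'$, and whose $(1,3)$-entry is $b-b'+c'(a'-a)$; the precise value of this last entry will not matter.

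Next I would invoke the shape of $H$: a matrix $g(x,y,z)\in UT_3(\F_3)$ belongs to $H=\{g(0,y,z)\}$ if and only if its $(1,2)$-entry vanishes, while its $(1,3)$- and $(2,3)$-entries are entirely unconstrained. Applying this criterion to $g\,(g')^{-1}$, membership in $H$ is controlled solely by the vanishing of the $(1,2)$-entry $a-a'$. Hence $g$ and $g'$ share a right coset of $H$ precisely when $a=a'$, which is the claim.

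There is no deep obstacle here; the computation is a single $3\times 3$ matrix product. The only point demanding care is the bookkeeping of conventions: using the right cosets $Hx$ that govern the induced-representation definition, and therefore testing $g\,(g')^{-1}\in H$ with the inverse on the correct side. (Because $H$ is maximal of index $3$ in the $3$-group $G$, it is automatically normal, so its left and right cosets in fact coincide; nonetheless it is cleanest to fix the right-coset convention from the start.) Once this is set up, the result falls out of reading a single entry of the product.
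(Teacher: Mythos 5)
Your proof is correct and takes essentially the same route as the paper's: both reduce the statement to testing whether the product of one element with the inverse of the other lies in $H$ (you use $g\,(g')^{-1}$, the paper uses $g'g^{-1}$; these are inverses of each other, so the two tests are equivalent), and both conclude by observing that membership in $H$ is exactly the vanishing of the $(1,2)$-entry $a-a'$. Your parenthetical remark that $H$ is normal (so left and right cosets coincide) is true but not needed for the argument.
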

	\begin{proof}
		$g$ and $g'$ are in the same right coset of $H$ if and only if $g'=hg$ for some $h\in H$ or equivalently, $g'g^{-1}\in H$.
		
		We have $gg'^{-1}=\begin{pmatrix}
			1&a'&b'\\
			0&1&c'\\
			0&0&1
		\end{pmatrix}\begin{pmatrix}
		1&-a&-b+ac\\
		0&1&-c\\
		0&0&1
		\end{pmatrix}=\begin{pmatrix}
		1&-a+a'&-b+ac-a'c+b'\\
		0&1&-c+c'\\
		0&0&1
		\end{pmatrix}\in H$ if and only if $a=a'$. 
	\end{proof}
	
	Therefore there are three right cosets of $H$ with representatives:$g(0,0,0),g(1,0,0), g(2,0,0)$ and each $\phi\in V$ is uniquely determined by the images of these representatives. 
	
	Let $\rho_k=Ind_H^G(\tau_k)$. By the definition of the induced representation we have:
	\begin{equation}
		(\rho_k(g(a,b,c))\phi)(g(0,0,0))=\phi(g(0,0,0)g(a,b,c))=\phi(g(a,b,c)).
	\end{equation}
	On the other hand, 
	\begin{equation}
		g(a,b,c)=\begin{pmatrix}
			1&a&b\\
			0&1&c\\
			0&0&1
		\end{pmatrix}=\begin{pmatrix}
	1&0&b\\
	0&1&c\\
	0&0&1
		\end{pmatrix}\begin{pmatrix}
		1&a&0\\0&1&0\\0&0&1
		\end{pmatrix}=g(0,b,c)g(a,0,0).
	\end{equation}
	So by \eqref{ind1} we have
	\begin{equation}\label{ind2}
	(\rho_k(g(a,b,c))\phi)(g(0,0,0))=\phi(g(a,b,c))=\rho_k(g(0,b,c))\phi(g(a,0,0))=\omega^{kb}\phi(g(a,0,0)).
	\end{equation}
	Similarly, we can find
	\begin{align}
		(\rho_k(g(a,b,c))\phi)(g(1,0,0))=\omega^{k(b+c)}\phi(g(a+1,0,0))\label{ind3}\\
		(\rho_k(g(a,b,c))\phi)(g(2,0,0))=\omega^{k(b+2c)}\phi(g(a+2,0,0))\label{ind4}
	\end{align}
	From \eqref{ind2}, \eqref{ind3},\eqref{ind4} we deduce that
	\begin{equation}\label{irr2}
		(\rho_k(g(a,b,c))\phi)(n,0,0)=\omega^{k(b+nc)}\phi(g(a+n,0,0))
	\end{equation} where $n\in \F_3,k\in \{1,2\}$

	\subsubsection{Character Table}
	
	Using formulas \eqref{char} and \eqref{irr2} we can find the characters of all representations described in the subsection \ref{irrep} as in the following table:
	\begin{table}[h!]\label{tab1}
		\centering
	\caption{Character Table of $\mathrm{UT}_3(\mathbb{F}_3)$}
	\begin{center}
		\begin{tabular}{|c|c|c|c|c|c|c|c|c|c|c|c|c|c}
			
			\hline
			Class & Representative & $\chi_{\rho_{0,0}}$ & $\chi_{\rho_{0,1}}$ & $\chi_{\rho_{0,2}}$&$\chi_{\rho_{1,0}}$&$\chi_{\rho_{1,1}}$&$\chi_{\rho_{1,2}}$&$\chi_{\rho_{2,0}}$&$\chi_{\rho_{2,1}}$&$\chi_{\rho_{2,2}}$&$\chi_{\rho_{1}}$&$\chi_{\rho_{2}}$\\\hline
			$C_1$& $I_3$ & $1$ & $1$ & $1$ &$1$&$1$&$1$&$1$&$1$&$1$& $3$&$3$\\\hline
			$C_2$  & $g(0,1,0)$ & $1$&$1$&$1$&$1$&$1$&$1$&$1$&$1$&$1$ & $3\omega$ & $3\omega^2$ \\\hline
			$C_3$ & $g(0,2,0)$ & $1$&$1$&$1$&$1$&$1$&$1$&$1$&$1$&$1$ & $3\omega^2$ & $3\omega$ \\\hline
			$C_4$  & $g(0,0,1)$ & $1$ & $\omega$&$\omega^2$& $1$ & $\omega$&$\omega^2$& $1$ & $\omega$&$\omega ^2$ & $0$ &$0$\\\hline
			$C_5$  & $g(0,0,2)$ & $1$ & $\omega^2$&$\omega$& $1$ & $\omega^2$&$\omega$& $1$ & $\omega^2$&$\omega$ & $0$ &$0$ \\\hline
			$C_6$  & $g(1,0,0)$ & $1$ & $1$&$1$&$\omega$& $\omega$&$\omega$&$\omega^2$&$\omega^2$&$\omega^2$&$0$& $0$ \\\hline
			$C_7$ & $g(1,0,1)$ & $1$ & $\omega$ & $\omega^2$&$\omega$&$\omega^2$&$1$&$\omega^2$&$1$&$\omega$&$0$&$0$ \\\hline
			$C_8$  & $g(1,0,2)$ & $1$ & $\omega^2$ &$\omega$&$\omega$&$1$&$\omega^2$&$\omega^2$&$\omega$&$1$& $0$&$0$ \\\hline
			$C_9$ & $g(2,0,0)$ & $1$&$1$&$1$&$\omega^2$&$\omega^2$&$\omega^2$ & $\omega$&$\omega$&$\omega$ & $0$&$0$ \\\hline
			$C_{10}$&$g(2,0,1)$&$1$&$\omega$&$\omega^2$&$\omega^2$&$1$&$\omega$&$\omega$&$\omega^2$&$1$&$0$&$0$\\\hline
			$C_{11}$&$g(2,0,2)$&$1$&$\omega^2$&$\omega$&$\omega^2$&$\omega$&$1$&$\omega$&$1$&$\omega^2$&$0$&$0$\\\hline
		\end{tabular}
	\end{center}
\end{table}

Now we use the characters of $\rho_1,\rho_2$ given in the previous table and Theorem \ref{ort1} to verify that they are irreducible:
\begin{proposition}
	The representations $\rho_1,\rho_2$ described by formula \eqref{irr2} are irreducible. 
\end{proposition}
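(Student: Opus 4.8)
The plan is to invoke the orthonormality criterion of Theorem~\ref{ort1}(b): a complex representation $\rho$ of a finite group is irreducible if and only if $\langle \chi_\rho, \chi_\rho \rangle = 1$. Hence for each $k \in \{1,2\}$ it suffices to compute the self inner product of the character $\chi_{\rho_k}$ read off from the character table and verify that it equals~$1$.

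First I would rewrite the defining sum over all $27$ elements of $G$ as a sum over the eleven conjugacy classes, using that $\chi_{\rho_k}$ is a class function (Proposition~\ref{char4}(3)) and that $\chi_{\rho_k}(g)\overline{\chi_{\rho_k}(g)} = \bigl|\chi_{\rho_k}(g)\bigr|^2$. This yields
\begin{equation}
\langle \chi_{\rho_k}, \chi_{\rho_k}\rangle = \frac{1}{27}\sum_{m=1}^{11} |C_m|\,\bigl|\chi_{\rho_k}(C_m)\bigr|^2,
\end{equation}
where $|C_m|$ denotes the cardinality of the class $C_m$. The decisive observation is that the table shows $\chi_{\rho_k}$ vanishing on all eight size-$3$ classes $C_4,\dots,C_{11}$, so those terms disappear and only the three singleton classes $C_1,C_2,C_3$ contribute. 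On each of these the character has modulus $3$ (its values are $3$, $3\omega$, $3\omega^2$ in some order, and $|\omega|=1$), so $\bigl|\chi_{\rho_k}(C_m)\bigr|^2 = 9$ for $m=1,2,3$. Substituting gives
\begin{equation}
\langle \chi_{\rho_k}, \chi_{\rho_k}\rangle = \frac{1}{27}\bigl(1\cdot 9 + 1\cdot 9 + 1\cdot 9\bigr) = \frac{27}{27} = 1
\end{equation}
for both $k=1$ and $k=2$, and Theorem~\ref{ort1}(b) then forces each $\rho_k$ to be irreducible.

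There is essentially no genuine obstacle here: the computation is short once the character values are tabulated. The only points demanding care are bookkeeping ones---each class must be weighted by its cardinality rather than merely counted, and one must use $|\omega|=1$ so that the factors of $\omega$ in the singleton-class entries contribute nothing to the norm. The same class-sum technique, applied instead to $\langle \chi_{\rho_1},\chi_{\rho_2}\rangle$ and using $1+\omega+\omega^2=0$, would give the value $0$ and thereby establish that $\rho_1$ and $\rho_2$ are non-isomorphic, completing the promised verification of the two degree-$3$ representations.
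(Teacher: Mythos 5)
Your proposal is correct and follows essentially the same route as the paper: both invoke Theorem~\ref{ort1}(b) and compute $\langle \chi_{\rho_k},\chi_{\rho_k}\rangle$ from the character table, where only the three central (singleton) classes with values $3, 3\omega, 3\omega^2$ contribute, giving $\tfrac{1}{27}(9+9+9)=1$. Your explicit weighting by class sizes and the closing remark on $\langle \chi_{\rho_1},\chi_{\rho_2}\rangle=0$ are slightly more complete than the paper's version, but the argument is the same.
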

	\begin{proof}
	 We have $\langle \chi_{\rho_1},\chi_{\rho_1}\rangle=\frac{1}{27}(3\times 3+3\omega\times 3\bar{\omega}+3\omega^2\times 3\overline{\omega^2})=\frac{27}{27}=1.$
		
		So by Theorem \ref{ort1} $\rho_1$ is irreducible. Similarly, $\rho_2$ is irreducible. 
	\end{proof}
	
	The knowledge of the complete set of irreducible representations and their characters allows the construction of $G$-equivariant feature spaces and convolution kernels. These tools are fundamental for the steerable architectures discussed later in this paper.

	It is necessary to mention that although existing software packages such as GAP can compute the character table of $UT_3(\F_3)$, 
and list its representations, but
	they do not explicitly construct these representations in matrix form aligned with group convolutional learning and do not connect them to filter decomposition or feature design in CNNs.
	
	Prior mathematical studies (e.g., on Heisenberg groups) cover general structures, but do not integrate the results into a machine learning framework.
	
	Our work constructs explicit matrix forms of irreducible representations, analyzes their action on feature spaces, and proves decomposition theorems (Section 3) essential for building G-CNN layers with theoretical guarantees.
	
	This is, to our knowledge, the first complete integration of the representation theory of 
	$UT_3(\F_3)$
into the architecture design of equivariant deep learning models.

\section{Architecture: G-CNN with Transformer Decoder Based on Irreducible Representations}

In this section, we present a complete architecture that combines a group-equivariant convolutional neural network (G-CNN) based on the irreducible representations (irreps for short) of the nilpotent group $G = UT_3(\mathbb{F}_3)$ with a Transformer decoder to perform handwritten mathematical expression recognition. This design avoids the use of the regular representation and instead directly structures the network according to irreps.

Each layer of the G-CNN consists of blocks acting independently on each irrep. Let $f_j^{(\ell)}$ be the feature map of irrep $j$ at layer $\ell$.

\subsection{Feature Extraction Using Irrep-Wise G-CNN}
We define the following modules:

\paragraph{Irrep Feature Map Format:}
Each scalar irrep $\chi_i$ has a feature map of shape $\mathbb{R}^{C_i \times H \times W}$. Each 3-dimensional irrep $\rho_k$ has a feature map $f_k(x) \in \mathbb{R}^{C_k \times 3 \times H \times W}$.

\paragraph{Equivariant Convolution:}
Given $f_j^{(\ell)}(x)$ and $\rho_j$, for each pair $(i,j)$ with $\rho_i \cong \rho_j$, we define:
\[
\Psi_{ij}^{(\ell)}(g) = A_{ij}^{(\ell)}(g) \otimes I_{d_i}, \quad A_{ij}^{(\ell)}(g) \in \mathbb{C}^{C_i \times C_j},
\]
and perform convolution:
\[
[f_i^{(\ell+1)}](x) = \sum_{j:\, \rho_j \cong \rho_i} \sum_{y \in G} A_{ij}^{(\ell)}(x^{-1}y) f_j^{(\ell)}(y).
\]

\paragraph{Nonlinearity:}
\begin{itemize}
	\item For 1-dimensional  irreps: apply standard nonlinearity $\sigma(f) = \mathrm{ReLU}(f)$.
	\item For 3-dimensional irreps: apply norm nonlinearity:
	\[
	\sigma(f(x)) = \frac{f(x)}{\|f(x)\|} \cdot \phi(\|f(x)\|)
	\]
	where $\phi$ is a scalar activation function.
\end{itemize}

\paragraph{Pooling Layer:}
We apply spatial pooling independently within each irrep branch:
\[
f_i^{(\ell)} \mapsto \mathrm{Pool}(f_i^{(\ell)})
\]

\paragraph{Concatenation and Projection:}
After $L$ layers, we concatenate all irreps:
\[
X = [f_1, \ldots, f_9, f_{10}^{(1)}, f_{11}^{(2)}] \in \mathbb{R}^{T \times C}, \quad C = \sum_{i=1}^9 C_i + \sum_{k=10}^{11} 3C_k
\]

\subsection{Transformer Decoder for Sequence Modeling}
Let $X = [x_1, x_2, \ldots, x_T]$, $x_t \in \mathbb{R}^C$ be the output from G-CNN. The decoder predicts a LaTeX sequence $y = [y_1, y_2, \ldots, y_L]$.

\paragraph{Positional Encoding:}
\[
\tilde{x}_t = x_t + p_t, \quad p_t = \text{PosEnc}(t)
\]

\paragraph{Decoder Layers:}
Each layer consists of self-attention, cross-attention, and feed-forward network:
\begin{align*}
	Q_t &= W_q y_{t-1}, \quad K_t = W_k \tilde{x}_t, \quad V_t = W_v \tilde{x}_t \\
	\text{Attention}(Q, K, V) &= \mathrm{softmax}\left(\frac{QK^\top}{\sqrt{d_k}}\right)V
\end{align*}

\paragraph{Output Projection:}
\[
\hat{y}_t = \mathrm{Softmax}(W_o h_t + b_o)
\]

\subsection{Loss Function}
We use cross-entropy with teacher forcing:
\[
\mathcal{L}(y, \hat{y}) = -\sum_{t=1}^L \log P(y_t | y_{<t}, X)
\]

\subsection{Remarks}
This architecture respects the group symmetry in the convolutional stage and achieves strong symbolic decoding capability via the transformer decoder. The direct use of irreducible components eliminates the need for a change-of-basis matrix.



\subsection{A concrete example on \(UT_{3}(\mathbb{F}_{3})\)}

We write elements of 
\(\displaystyle G = UT_{3}(\mathbb{F}_{3})\) 
as 
\[
g(a,b,c)
=\begin{pmatrix}
	1 & a & c\\
	0 & 1 & b\\
	0 & 0 & 1
\end{pmatrix},\quad a,b,c\in\{0,1,2\}.
\]  

One of the nine 1-dimensional irreducible characters is
\[
\chi_{\rho_{(1,0)}}\bigl(g(a,b,c)\bigr)
=\exp\!\bigl(\tfrac{2\pi i}{3}\,a\bigr).
\]
Let us define a constant feature map \(f: G\to\mathbb{C}\), 
\[
f\bigl(g(a,b,c)\bigr)=1\quad\forall\,g\in G.
\]
We choose the corresponding scalar filter (kernel) 
\(\psi=\chi_{\rho_{(1,0)}}\).  The equivariant convolution at the identity is
\[
(f \star  \psi)(e)
=\sum_{h\in G}f(h)\,\psi\bigl(e^{-1}h\bigr)
=\sum_{a,b,c\in\{0,1,2\}} 
\exp\!\Bigl(\tfrac{2\pi i}{3}\,a\Bigr).
\]
Since for fixed \(b,c\) the inner sum 
\(\sum_{a=0}^2e^{2\pi i a/3}=0\), the total vanishes:
\[
(f \star \psi)(e)=0.
\]

By contrast, if we take the \emph{trivial} character 
\(\chi_{\mathrm{triv}}(g)=1\), then
\[
(f * \chi_{\mathrm{triv}})(e)
=\sum_{h\in G}1\cdot1
=27,
\]
which agrees with projecting onto the trivial isotypic subspace.

\medskip

\noindent\textbf{Interpretation.}  
\begin{enumerate}
\item Using a non-trivial 1-dimensional character \(\chi_{(1,0)}\) “picks out” that irreducible component, and the sum vanishes by orthogonality of characters.  

\item Using the trivial character projects onto the invariant subspace, summing to \(|G|=27\).  

This real example on \(UT_3(\mathbb{F}_3)\) vividly shows how equivariant convolution
\[
(f * \psi)^{(\rho)}(e)
=\sum_{h\in G}f(h)\,\psi\bigl(h^{-1}\bigr)
\]
implements the orthogonal projection onto each irreducible subrepresentation.  

\end{enumerate}

\section{Conclusion and Outlook}

In this paper, we classified all irreducible complex representations of $UT_3(\mathbb{F}_3)$ and computed its complete character table. We proved a decomposition theorem for $G$-modules and established how equivariant operators must respect the isotypic decomposition.

Our results are purely algebraic yet carry implications for the theory of symmetry in functional structures. For instance, when considering a neural network layer as a linear map between feature spaces carrying $G$-actions, our theorems dictate that these maps can be partitioned into smaller   filters which lie in $\mathrm{Hom}_G(V_i, V_j)$ (Theorem \ref{red}) and thus be highly structured.

This approach contributes to the understanding of symmetry-preserving operators in abstract settings, such as:
\begin{itemize}
	\item Designing mathematically sound equivariant transformations,
	\item Classifying symmetry types in data spaces,
	\item Predicting the algebraic constraints on learning architectures.
\end{itemize}

Looking forward, the same representation‐theoretic paradigm can be naturally generalized to larger upper‐triangular groups
\(UT_n(\mathbb{F}_q)\) for \(n>3\) or arbitrary finite fields \(\mathbb{F}_q\).  Such groups have classical decompositions into one‐dimensional and higher‐dimensional irreps (via Kirillov’s orbit method \cite{Kirillov}), so one can likewise build G-CNN layers by placing scalar filters on each isotypic block.  Although the number of irreps and the block‐sizes grow (e.g.\ \(|UT_n(\F_q)=q^{n(n-1)/2}\)), the same Schur‐lemma reduction ensures that each layer remains block‐diagonal and therefore computationally tractable. Beyond unitriangular groups, other classes of finite nilpotent (or even solvable) groups such as the Heisenberg groups over \(\mathbb{F}_q\) or direct products of \(p\)-groups admit similar representation decompositions.  We therefore anticipate that our framework can be extended to these settings, opening the door to G-CNNs tailored to a wide range of algebraic symmetries arising in structured vision and symbolic reasoning tasks.

Future work will explore these generalizations in detail, examining both the practical trade-offs in filter count and computational cost, and the gains in equivariance and data efficiency on larger or more complex symmetry groups.

\vskip0.3cm\noindent\textbf{Acknowledgements.} The authors are grateful to the referees for their careful reading of the manuscript and their useful comments.


\begin{thebibliography}{9}
		\bibitem{Serre1977}
		J.P. Serre. \emph{Linear Representations of Finite Groups}, Graduate Text in Mathematics,42, Springer,1977.
		\bibitem{Etingof2011} P.Etingof et al. \emph{Introduction to representation
			theory}, Lecture notes of MIT,2011.
		\bibitem{cohen2017} T. Cohen, M. Welling. \emph{Steerable CNNs}. 5th International Conference on Learning Representations, Toulon, France, ICLR 2017, Poster Presentation. 
		\bibitem{cohen2019}
		T. Cohen, M.Weiler, B. Kicanaoglu, M.Welling. \emph{Gauge Equivariant Convolutional Networks and the Icosahedral CNN}. Proceedings of the 36th International Conference on Machine Learning, Long Beach, California, PMLR 97,2019, pp. 1321-1330.
		
		
		\bibitem{esteves2020}
		Esteves, C., Allen-Blanchette, C., Makadia, A.,Daniilidis, K. \emph{Learning SO(3) Equivariant Representations with Spherical CNNs}. International Journal of Computer Vision, 128(3),2020, pp. 588-600.
		\bibitem{kondor2018} R.Kondor, S. Trivedi. \emph{On the Generalization of Equivariance and Convolution in Neural Networks
			to the Action of Compact Groups}, Proceedings of the 35 th International Conference on Machine
			Learning, Stockholm, Sweden, PMLR 80, 2018, pp. 2747-2755.
			\bibitem{cohen2016}
		T. Cohen, M. Welling. \emph{Group Equivariant Convolutional Networks}. Proceedings of The 33rd International Conference on Machine Learning. PMLR 48,2016, pp.2990-2999.
		
	
		
		
		\bibitem{zhang1988}
		W.Zhang et al. \emph{Shift Invariant Pattern Recognition Neural Network and Its Optical Architecture}. Proceedings of Annual Conference of the Japan Society of Applied Physics, 6p-M-14, 734.
		\bibitem{zhang2018}
		Z. Zhang et al. \emph{Watch, Attend and Parse: An End-to-End Neural Network Based Approach to Handwritten Mathematical Expression Recognition} \textit{IEEE TPAMI}, 2018.
		
		\bibitem{weiler2019}
		M. Weiler, G. Cesa.  \emph{General E(2)-Equivariant Steerable CNNs}, 32nd Conference on
		Neural Information Processing Systems (NeurIPS 2019): Vancouver, Canada, 8-14
		December 2019 , Vol. 19, pp. 14290-14301. 
		\bibitem{Kirillov}
		A.A.Kirillov, \emph{Lectures on the orbit method}, Graduate Studies in Mathematics, vol. 64, 2004, Providence, RI: American Mathematical Society, ISBN 978-0-8218-3530-2.
			
	\end{thebibliography}

\end{enumerate}
\end{document}